\theoremstyle{plain}
\newtheorem{theorem}{Theorem}[section]
\crefname{theorem}{Theorem}{Theorems}
\Crefname{theorem}{Theorem}{Theorems}
\newtheorem*{lemma*}{Lemma}
\newtheorem{lemma}[theorem]{Lemma}
\crefname{lemma}{Lemma}{Lemmas}
\Crefname{lemma}{Lemma}{Lemmas}
\newtheorem*{claim*}{Claim}
\newtheorem{claim}[theorem]{Claim}
\crefname{claim}{Claim}{Claims}
\Crefname{claim}{Claim}{Claims}
\newtheorem{proposition}[theorem]{Proposition}
\crefname{proposition}{Proposition}{Propositions}
\Crefname{proposition}{Proposition}{Propositions}
\crefname{corollary}{Corollary}{Corollaries}
\Crefname{corollary}{Corollary}{Corollaries}
\newtheorem{conjecture}[theorem]{Conjecture}
\crefname{conjecture}{Conjecture}{Conjectures}
\Crefname{conjecture}{Conjecture}{Conjectures}
\crefname{question}{Question}{Questions}
\Crefname{question}{Question}{Questions}
\crefname{observation}{Observation}{Observations}
\Crefname{observation}{Observation}{Observations}
\crefname{example}{Example}{Examples}
\Crefname{example}{Example}{Examples}
\theoremstyle{definition}
\crefname{problem}{Problem}{Problems}
\Crefname{problem}{Problem}{Problems}
\newtheorem{definition}[theorem]{Definition}
\crefname{definition}{Definition}{Definitions}
\Crefname{definition}{Definition}{Definitions}
\theoremstyle{remark}
\crefname{remark}{Remark}{Remarks}
\Crefname{remark}{Remark}{Remarks}
\xpatchcmd{\proof}{\itshape}{\normalfont\proofnamefont}{}{}
\newcommand{\proofnamefont}{}
\renewcommand{\proofnamefont}{\bfseries}
\newcommand{\remove}[1]{}
\newcommand{\al}{\alpha}
\newcommand{\be}{\beta}
\newcommand{\eps}{\varepsilon}
\newcommand{\Bin}{\text{Bin}}
\DeclareMathOperator{\ex}{ex}
\newcommand{\C}{\mathcal{C}}
\newcommand{\F}{\mathcal{F}}
\newcommand{\Pa}{\mathcal{P}}
\title{Bipartite-ness under smooth conditions}
\author{
        Tao Jiang
        \thanks{Department of Mathematics, Miami University, Oxford, OH 45056, USA. Email: \texttt{jiangt}@\texttt{miamioh.edu}.
        Research supported by National Science Foundation grant DMS-1855542.}
    \and
	    Sean Longbrake\thanks{
	    Department of Mathematics, Miami, Oxford, OH 45056, USA. Current address: Department of Mathematics, Emory University, Atlanta, GA 30322, Email: \texttt{
	    sean.longbrake}@\texttt{emory.edu}.
        Research supported by National Science Foundation grant DMS-1855542.}	
    \and
    Jie Ma \thanks{
School of Mathematical Sciences, University of Science and Technology of China, Hefei, Anhui 230026, China.
Email: \texttt{jiema}@\texttt{ustc.edu.cn}.
Research supported by the National Key R and D Program of China 2020YFA0713100,
National Natural Science Foundation of China grant 12125106, and Anhui Initiative in Quantum Information Technologies grant AHY150200.
\newline\indent
{\it 2010 Mathematics Subject Classifications:}
05C35, 05C38.
{\it Key Words}:  Tur\'an numbers, cycles, bipartite graphs
}
}
\begin{document}

\maketitle

\begin{abstract}

	\setlength{\parskip}{\medskipamount}
    \setlength{\parindent}{0pt}
    \noindent

Given a family $\F$ of bipartite graphs, the {\it Zarankiewicz number} $z(m,n,\F)$ is the maximum number of edges in an $m$ by $n$ bipartite graph $G$
that does not contain any member of $\F$ as a subgraph (such $G$ is called {\it $\F$-free}).
For $1\leq \beta<\alpha<2$, a family $\F$ of bipartite graphs is $(\alpha,\beta)$-{\it smooth} if for some $\rho>0$ and every $m\leq n$, $z(m,n,\F)=\rho m n^{\alpha-1}+O(n^\beta)$.
Motivated by their work on a conjecture of Erd\H{o}s and Simonovits on compactness and a classic result of Andr\'asfai, Erd\H{o}s and S\'os,
in \cite{AKSV} Allen, Keevash, Sudakov and Verstra\"ete proved that for any $(\alpha,\beta)$-smooth family $\F$,
there exists $k_0$ such that for all odd $k\geq k_0$ and sufficiently large $n$, any $n$-vertex $\F\cup\{C_k\}$-free graph with minimum degree at least $\rho(\frac{2n}{5}+o(n))^{\alpha-1}$ is bipartite.

In this paper, we strengthen their result by showing that for every real $\delta>0$, there exists $k_0$ such
that for all odd $k\geq k_0$ and sufficiently large $n$, any $n$-vertex $\F\cup\{C_k\}$-free graph with minimum degree at least $\delta n^{\alpha-1}$ is bipartite.
Furthermore, our result holds under a more relaxed notion of smoothness, which include the families $\F$ consisting of the single graph $K_{s,t}$ when $t\gg s$.
We also prove an analogous result for $C_{2\ell}$-free graphs for every $\ell\geq 2$, which complements a result of Keevash, Sudakov and Verstra\"ete in \cite{KSV}.
\end{abstract}


\section{Introduction} \label{sec:intro}
Given a family $\F$ of graphs, a graph $G$ is called {\it $\F$-free} if $G$ does not contain any member of $\F$ as a subgraph.
If $\F$ consists of a single graph $F$ then we simply say that $G$ is {\it $F$-free}.
The {\it Tur\'an number} of $\F$, denoted by $\ex(n, \F)$, is the maximum possible number of edges in an $n$-vertex $\F$-free graph.
As is well known, this function is well-understood when $\F$ consists only of non-bipartite graphs due to the celebrated Erd\H{o}s-Stone-Simonovits theorem \cite{Erdos-Stone, ES-limit} but is generally open when $\F$ contains bipartite graphs. For a family of graphs $\F$, a closely related notion is the so-called
{\it Zarankiewicz number} $z(n,\F)$, which is defined to be the maximum number of edges in
an $n$-vertex $\F$-free bipartite graph. More generally, we denote by $z(m,n,\F)$ the
maximum number of edges in an $m$ by $n$ bipartite graph that is $\F$-free.
In a seminal paper \cite{ES-compactness}, Erd\H{o}s and Simonovits raised a number of intriguing conjectures on Tur\'an numbers for bipartite graphs.
One of them is the following (Conjecture 3 in \cite{ES-compactness}).
Given a positive odd integer $k$, let $\C_k$ denote the family of all odd cycles of length at most $k$. Throughout this paper, we write $f(n)\sim g(n)$ for two functions $f, g: \mathbb{N}\to \mathbb{R}$ if $\lim_{n\to \infty} f(n)/g(n)=1$.

\begin{conjecture} [Erd\H{o}s-Simonovits \cite{ES-compactness}] \label{conj:ES}
Given any finite family $\F$ of graphs, there exists an odd integer $k$ such that as $n\to \infty$
\[\ex(n,\F\cup \C_k)\sim z(n,\F).\]
\end{conjecture}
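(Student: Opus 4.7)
The plan is to combine the paper's strengthened bipartiteness theorem with a degree-cleaning argument, which will resolve the conjecture in the smooth case up to a $(1+\eps)$ factor; promoting this to the asymptotic equality demanded for a single $k$, and handling non-smooth families, are the remaining hurdles.

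Assume first that $\F$ is $(\alpha,\beta)$-smooth with $1 < \alpha < 2$, so $z(n, \F) \sim c n^\alpha$ for a suitable constant $c > 0$. The lower bound $\ex(n, \F \cup \C_k) \geq z(n, \F)$ is immediate: any $\F$-free bipartite graph has no odd cycles at all, hence is $\F \cup \C_k$-free.

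For the upper bound, fix $\eps > 0$ and pick $\delta > 0$ with $\delta/c$ small compared to $\eps$. The main theorem of this paper (previewed in the abstract) furnishes a threshold $k_0 = k_0(\delta, \F)$ such that for all odd $k \geq k_0$ and all sufficiently large $n$, any $n$-vertex $\F \cup \{C_k\}$-free graph with minimum degree at least $\delta n^{\alpha-1}$ is bipartite. Fix such a $k$ and let $G$ attain $\ex(n, \F \cup \C_k)$. Iteratively delete vertices of degree less than $\delta n^{\alpha-1}$ to obtain a subgraph $G'$ with minimum degree at least $\delta n^{\alpha-1}$, having deleted at most $\delta n \cdot n^{\alpha-1} = \delta n^\alpha$ edges. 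The subgraph $G'$ remains $\F \cup \{C_k\}$-free, so the bipartiteness theorem forces $G'$ to be bipartite, and $\F$-freeness then gives $e(G') \leq z(|V(G')|, \F) \leq z(n, \F)$. Hence
\[
    \ex(n, \F \cup \C_k) \;\leq\; z(n, \F) + \delta n^\alpha \;\leq\; (1 + C\eps)\, z(n, \F)
\]
for a constant $C = C(c,\alpha)$ and all sufficiently large $n$.

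The main obstacle is converting this $\eps$-dependent inequality into the asymptotic equality demanded by the conjecture, for a single fixed $k$. Monotonicity in $k$ shows that $f(k) := \limsup_n \ex(n, \F \cup \C_k)/z(n,\F)$ is a non-increasing function with $f(k) \to 1$, but this does not force $f(k) = 1$ at any specific $k$. To close this gap, I would attempt an Erd\H{o}s-style minimum-degree argument for extremal graphs: combining the trivial bound $\ex(n) \leq \ex(n-1) + \delta_{\min}(G_n)$ with the lower bound $\ex(n) \geq c n^\alpha$ forces the minimum degree of an extremal $\F \cup \C_k$-free graph to be $\Omega(n^{\alpha-1})$ for infinitely many $n$; along such a subsequence the cleaning step is unnecessary and AKSV yields $\ex(n) = z(n, \F)$ exactly. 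Extending this from a subsequence to all large $n$, so as to eliminate the $(1+\eps)$ slack, is the heart of the difficulty and will likely require a stability version of the bipartiteness theorem controlling near-extremal graphs. A secondary obstacle is that the conjecture quantifies over all finite $\F$, whereas the entire scheme rests on smoothness; removing this hypothesis requires either a smoothness-free analogue of the bipartiteness theorem or a reduction that embeds any finite $\F$ into a smooth family sharing the same Zarankiewicz asymptotics.
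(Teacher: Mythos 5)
You should first note that the statement you were asked to prove is an open conjecture: the paper does not prove it for arbitrary finite families $\F$, and neither do you. What the paper does establish is the conjecture for $(\alpha,\beta)$-smooth families, via Theorem~\ref{thm:improve}, whose proof follows exactly the route you propose: delete vertices of degree below $\delta n^{\alpha-1}$, apply Theorem~\ref{thm:main} to conclude the remainder $H$ is bipartite, and bound $e(H)$ by $z(n,\F)$. Up to the inequality $\ex(n,\F\cup\C_k)\le z(n,\F)+\delta n^{\alpha}$ your argument is correct, and you correctly diagnose that this only yields $f(k)\le 1+C\eps$ with $k_0$ depending on $\eps$, which does not give $\ex(n,\F\cup\C_k)\sim z(n,\F)$ for any single fixed $k$.

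The genuine gap is that you do not close this slack, and your proposed fix (an Erd\H{o}s-style minimum-degree argument, which as you admit only works along a subsequence) is not how it is done. The paper's resolution is to fix $\delta:=\rho/2^{\alpha+3}$ once and for all --- depending only on $\F$, not on any $\eps$ --- and then to exploit extremality of $G$ to show that very few vertices are actually deleted. Concretely, writing $t$ for the number of deleted vertices, one has $e(G)\ge z(n,\F)\ge \frac{\rho}{2^{\alpha}}n^{\alpha}+C_1n^{\beta}$ and $e(H)\le z(n-t,\F)\le \frac{\rho}{2^{\alpha}}(n-t)^{\alpha}+C_2n^{\beta}$, while the cleaning removes at most $t\cdot\delta n^{\alpha-1}$ edges. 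Comparing these and applying the Mean Value Theorem to $\frac{\rho}{2^{\alpha}}n^{\alpha}-\frac{\rho}{2^{\alpha}}(n-t)^{\alpha}$ gives $\frac{\rho\alpha}{2^{\alpha}}t(n')^{\alpha-1}-t\delta n^{\alpha-1}\le (C_2-C_1)n^{\beta}$ for some $n'\ge n-t\ge \frac34 n$; since $\frac{\rho\alpha}{2^{\alpha}}(\frac34)^{\alpha-1}>2\delta$, each deleted vertex costs more in the Zarankiewicz budget than the fewer-than-$\delta n^{\alpha-1}$ edges it carries, forcing $t=O(n^{1+\beta-\alpha})$ and a total edge loss of only $O(n^{\beta})=o(n^{\alpha})$. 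This yields $\ex(n,\F\cup\{C_k\})=z(n,\F)+O(n^{\beta})$, hence asymptotic equality, for every fixed odd $k\ge k_0(\F)$ --- no $\eps$, no subsequence, no stability version needed. The case of general (non-smooth) finite families, which the conjecture also demands, remains open in the paper as well, so you should not expect to remove that hypothesis here.
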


Erd\H{o}s and Simonovits \cite{ES-compactness} verified the conjecture for $\F=\{C_4\}$ by showing that
$\ex(n, \{C_4,C_5\})\sim z(n, C_4)\sim (\frac{n}{2})^{\frac{3}{2}}$.
Keevash, Sudakov and Verstra\"ete \cite{KSV} further confirmed this conjecture for $\F_\ell:=\{C_4,C_6,\dots, C_{2\ell}\}$ where $\ell\in \{2,3,5\}$ in stronger forms and proved a related result for the chromatic number of $\F_\ell\cup \{C_k\}$-free graphs of minimum degree $\Omega(n^{1/\ell})$.
In a subsequent paper \cite{AKSV}, Allen, Keevash, Sudakov and Verstra\"ete provided a general approach to Conjecture~\ref{conj:ES} (using Scott's sparse regularity lemma \cite{Scott}),
which works for the following families of bipartite graphs.

\begin{definition}\label{dfn:smooth}
Let $\alpha, \beta$ be reals with $2>\alpha>\beta\geq 1$. Let $\F$  be a family of bipartite graphs.
If there exists some $\rho> 0$ such that for every $m\leq n$,
\[z(m,n, \F)=\rho mn^{\alpha-1}+O(n^\beta)\]
holds, then we say that $\F$ is {\it $(\alpha,\beta)$-smooth} with relative density $\rho$.
We call a bipartite family $\F$ {\it smooth} if it is $(\al,\beta)$-smooth for some $\al$ and $\beta$.
\end{definition}

It is easy to see that for any $(\alpha,\beta)$-smooth family $\F$, we have $z(n,\F)=\rho(n/2)^\alpha+O(n^\beta).$

Before we mention the results of \cite{AKSV}, let us discuss some known examples of smooth families.
Improving results of  K\"ov\'ari-S\'os-Tur\'an \cite{KST},
F\"uredi~\cite{furedi-zarankewicz} showed that if $m\leq n$ and $s,t\in\mathbb{N}$ then
\begin{equation}\label{equ:z(Kst)}
z(m,n, K_{s,t}) \leq (t-s+1)^{1/s} mn^{1-1/s}+ sm +s n^{2-2/s}.
\end{equation}
This together with the constructions of Brown \cite{brown} and F\"uredi \cite{furedi-96} shows that $K_{2,t}$ and $K_{3,3}$ are smooth families
(see \cite{AKSV}).
Allen, Keevash, Sudakov and Verstra\"ete \cite{AKSV} also showed that $\{K_{2,t}, B_t\}$ is smooth, where $B_t$ consists of $t$ copies of $C_4$ sharing an edge (and no other vertices).
However, it is not known if $K_{s,t}$ is smooth for any $s\geq 3$ and $t\geq 4$ and if $C_{2\ell}$ is smooth for any $\ell\geq 3$, due to a lack of constructions that asymptotically match upper bounds on Zarankiewicz numbers.
We would like to point out that not all families of bipartite graphs are smooth -- in the concluding remarks we provide an example of bipartite graphs which are not smooth.

The main result of Allen, Keevash, Sudakov and Verstra\"ete  \cite{AKSV} is as follows.
A family $\mathcal{G}$ of graphs is {\it near-bipartite} if every graph $G\in \mathcal{G}$ has a bipartite subgraph $H$ such that $e(G)\sim e(H)$ as $|V(G)|\to \infty$.

\begin{theorem}[Allen-Keevash-Sudakov-Verstra\"ete~\cite{AKSV}] \label{thm:AKSV-smooth}
Let $\F$ be an $(\alpha,\beta)$-smooth family with $2>\alpha>\beta\geq 1$.
There exists $k_0$ such that if $k\geq k_0 \in \mathbb{N}$ is odd, then
the family of all extremal $\F\cup \{C_k\}$-free graphs is near-bipartite and, in particular, $\ex(n,\F\cup \{C_k\})\sim z(n,\F)$.
\end{theorem}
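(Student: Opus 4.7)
The plan is to establish both directions of $\ex(n,\F\cup\{C_k\})\sim z(n,\F)$ and simultaneously extract the bipartite subgraph witnessing near-bipartiteness of extremal graphs. The lower bound is immediate: any $n$-vertex extremal $\F$-free bipartite graph has $z(n,\F)=\rho(n/2)^\alpha+O(n^\beta)$ edges by smoothness and is trivially $C_k$-free since $k$ is odd. So the entire task is to show that any extremal $\F\cup\{C_k\}$-free graph $G$ on $n$ vertices satisfies $e(G)\leq z(n,\F)+o(n^\alpha)$, and that this count is achieved by a bipartite subgraph $H\subseteq G$ with $e(H)=e(G)-o(n^\alpha)$.

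Let $G$ be an extremal $n$-vertex $\F\cup\{C_k\}$-free graph. Since $G$ is $\F$-free, smoothness (together with a random bisection of $V(G)$) yields $e(G)=O(n^\alpha)$, so $G$ has density $\Theta(n^{\alpha-2})$. I would apply Scott's sparse regularity lemma to $G$ to obtain an equipartition $V_1,\dots,V_r$ in which almost all pairs $(V_i,V_j)$ are $\eps$-regular with respect to the normalized density $d(V_i,V_j)/n^{\alpha-2}$. Form the reduced graph $R$ on $[r]$ by placing an edge $ij$ whenever $(V_i,V_j)$ is $\eps$-regular with normalized density at least $\eta$, for parameters $\eps\ll\eta\ll 1$. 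A routine cleaning argument shows that the edges of $G$ not captured by pairs recorded in $R$ (those in irregular, sparse, or intra-cluster positions) contribute only $o(n^\alpha)$ in total.

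The decisive dichotomy concerns whether $R$ contains an odd cycle. If $R$ is bipartite, lifting its bipartition back to $V(G)$ yields a bipartite subgraph $H\subseteq G$ with $e(G)-e(H)=o(n^\alpha)$. Since $H$ is bipartite and $\F$-free, $e(H)\leq z(n,\F)$, closing the upper bound and confirming near-bipartiteness. If instead $R$ contains an odd cycle, then via a sparse-regularity embedding lemma $G$ should contain $C_\ell$ for every odd $\ell$ in a long interval of integers whose length is governed by $|V(R)|$ and the densities. The recipe is standard: fix a short odd cycle in $R$, embed its vertices as single representatives from each chosen cluster, insert regular paths of tunable lengths through intermediate clusters along the cycle, and use an "oscillation" between two fixed adjacent regular pairs to realize every sufficiently large odd length. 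For $k_0$ large enough in terms of the regularity constants, this interval includes $k$, contradicting $C_k$-freeness.

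The main obstacle is executing the embedding step cleanly in the sparse regime. One must upgrade closed odd walks in $R$ into actual copies of $C_k$ inside a host of density $\Theta(n^{\alpha-2})$, which requires the counting/embedding lemma to cooperate with the $\F$-freeness of $G$ so that the regular pairs retain enough pseudo-random structure to support disjoint path embeddings of prescribed parities and lengths. Smoothness is essential on both sides of the argument: it provides the quantitative supersaturation that feeds the odd-cycle embedding, and it delivers the sharp inequality $e(H)\leq z(n,\F)$ in the bipartite branch that matches the lower bound to within $o(n^\alpha)$.
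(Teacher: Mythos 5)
Your proposal reconstructs the route of the original Allen--Keevash--Sudakov--Verstra\"ete paper: sparse regularity, a reduced graph, and a dichotomy on whether the reduced graph is bipartite. The paper under review deliberately does \emph{not} prove the statement this way. It derives a stronger version (Theorem~\ref{thm:improve}) from its minimum-degree result (Theorem~\ref{thm:main}): starting from an extremal $\F\cup\{C_k\}$-free graph, iteratively delete vertices of degree below $\delta n^{\alpha-1}$ with $\delta=\rho/2^{\alpha+3}$, apply Theorem~\ref{thm:main} to conclude the remaining graph is bipartite, and then use the smoothness bounds plus the Mean Value Theorem to show that only $O(n^{1+\beta-\alpha})$ vertices and $O(n^{\beta})$ edges were removed. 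That argument buys a sharper error term ($O(n^\beta)$ instead of $o(n^\alpha)$) and avoids sparse regularity entirely; the engine is instead an expansion/robust-reachability argument that builds a $C_k$ through a forbidden odd edge directly. Your route, if executed, only recovers the $o(n^\alpha)$ statement.

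The genuine gap in your sketch is the step you defer: ``via a sparse-regularity embedding lemma $G$ should contain $C_\ell$ for every odd $\ell$ in a long interval \dots\ the recipe is standard.'' In the sparse regime this is precisely where all the work lies, and it is not standard. For a pair of relative density $d$ at scale $p=n^{\alpha-2}$, a single vertex has only $\approx dp|V_j|\ll \eps|V_j|$ neighbours in the opposite cluster, so $(\eps,p)$-regularity says nothing about the neighbourhood of an individual representative vertex; embedding an odd cycle of $R$ ``as single representatives from each chosen cluster'' and then threading paths through them cannot be done by na\"ive greedy extension. One must instead propagate \emph{large} vertex sets along the walk (so that regularity applies at every step), discard atypical vertices, and crucially use smoothness/$\F$-freeness to rule out dense patches that would make the regular pairs badly behaved --- this transference of density to the cluster graph is the technical heart of the AKSV proof and is exactly what your proposal labels ``the main obstacle'' without resolving. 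A second, smaller point: your cleaning step for irregular pairs needs the concavity of $x^{\alpha-1}$ (or an equivalent averaging) to keep their total edge count at $O(\eps^{\alpha-1}n^\alpha)$; a pair-by-pair worst-case bound of $z(n/r,n/r,\F)$ times $\eps r^2$ gives $O(\eps r^{2-\alpha}n^\alpha)$, which is not small since $T(\eps)$ is not under your control.
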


The authors \cite{AKSV} also raised a question whether the extremal $n$-vertex $\F\cup \{C_k\}$-free graph in Theorem~\ref{thm:AKSV-smooth} is exactly bipartite when $n$ is sufficiently large.
Motivated by the classic result of Andr\'asfai, Erd\H{o}s and S\'os \cite{AES} stating that any $n$-vertex triangle-free graph with minimum degree more than $2n/5$ must be bipartite,
Allen, Keevash, Sudakov, and Verstra\"ete \cite{AKSV} proved the following theorem, which answers their own question for extremal graphs satisfying appropriate minimum degree condition.

\begin{theorem}[Allen-Keevash-Sudakov-Verstra\"ete~\cite{AKSV}] \label{thm:AKSV-min}
Let $\F$ be an $(\alpha, \beta)$-smooth family with relative density $\rho$ and $2>\alpha>\beta\geq 1$.
Then there exists $k_0$ such that for any odd $k\geq k_0$ and sufficiently large $n$,
any $n$-vertex $\F\cup \{C_k\}$-free graph with minimum degree at least $\rho(\frac{2n}{5}+o(n))^{\alpha-1}$ is bipartite.
\end{theorem}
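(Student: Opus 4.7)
\textbf{The plan} is to argue by contradiction: suppose $G$ is an $n$-vertex $\F\cup\{C_k\}$-free graph with $\delta(G)\geq \rho(\tfrac{2n}{5}+o(n))^{\alpha-1}$ that is not bipartite, and exhibit a copy of $C_k$ in $G$. First I would extract a near-bipartite structure. The lower bound $e(G)\geq n\delta(G)/2=\Theta(n^\alpha)$ places $e(G)$ within a constant factor of the extremal count $\rho(n/2)^\alpha$, and a stability-type version of the proof of Theorem~\ref{thm:AKSV-smooth} then shows that any $\F\cup\{C_k\}$-free graph this dense is close to bipartite: there exists a partition $V(G)=A\cup B$ with $|A|,|B|=(\tfrac{1}{2}+o(1))n$ and $e(G[A])+e(G[B])=o(n^\alpha)$. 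I would further refine $(A,B)$ to maximize $e(A,B)$, so that the bipartite subgraph $H:=G[A,B]$ inherits the large minimum degree $\delta(H)\geq\delta(G)/2$.

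Next, since $G$ is not bipartite, there is at least one edge $u_1u_2$ inside $A$ (the case inside $B$ is symmetric). The goal is to find a $u_1$-$u_2$ path of length $k-1$ in $H$, which together with the edge $u_1u_2$ closes to a $C_k$ in $G$ and contradicts $C_k$-freeness. Since $u_1,u_2\in A$ and $k-1$ is even, such a path has the correct parity to live in the bipartite graph $H$. To produce it, I would apply Scott's sparse regularity lemma to $H$ (as in \cite{AKSV}) to obtain a decomposition into $\eps$-regular pairs of density $\Theta(n^{\alpha-2})$; the min-degree condition forces $u_1$ and $u_2$ into typical clusters that are richly connected through the regular partition. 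A standard regularity embedding then yields $u_1$-$u_2$ paths in $H$ of any prescribed length inside a wide window, and choosing $k_0$ large enough places $k-1$ within this window.

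The principal technical obstacle is producing a path of the \emph{exact} length $k-1$ in a sparse $\F$-free bipartite graph, where the global density $n^{\alpha-2}$ tends to zero. Standard DFS or rotation arguments yield long paths but not of a prescribed length, and classical Bondy--Simonovits-type pancyclicity relies on densities above $n^{1+1/k}$, so the smoothness of $\F$ must be used essentially to control the count of paths of each length. Coordinating the sparse regularity step with the $\F$-free embedding constraints, while absorbing the $o(n)$ slack in the minimum-degree hypothesis, is where the heart of the proof lies.
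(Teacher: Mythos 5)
This statement is quoted from \cite{AKSV}; the present paper does not reprove it but instead derives it from the strictly stronger Theorem~\ref{thm:main} (any $\delta n^{\alpha-1}$ in place of $\rho(\frac{2n}{5}+o(n))^{\alpha-1}$). Your skeleton agrees with the paper's: pass to a bipartite subgraph $H$ inheriting half the minimum degree, locate an edge $u_1u_2$ inside one part, and reach a contradiction by finding a $u_1$--$u_2$ path of even length $k-1$ in $H$. (One small simplification: your stability step is superfluous; taking a maximum spanning bipartite subgraph, as in Lemma~\ref{lem:bipartite}, already gives connectivity and $d_H(v)\geq d_G(v)/2$, which is all you use.) Where the two arguments genuinely diverge is the engine for producing that path. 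The paper deliberately avoids sparse regularity (see concluding remark 4) and instead uses elementary expansion (Lemma~\ref{lem:linear-size}: linear-size balls of bounded radius), the robust reachability lemma (Lemma~\ref{lem:robust-reachability}: linearly many targets reachable by short paths with no vertex overused), and a diameter bound (Lemma~\ref{lem:diameter}); the exact length $k-1$ is then achieved by gluing $uv\cup Q\cup R_w\cup T\cup P_{w^*}$, where the slack is absorbed by greedily growing an even-length path $T$ inside a subgraph $H^*$ of minimum degree much larger than $k$.

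The gap in your proposal is exactly the step you defer to ``a standard regularity embedding.'' In the sparse regime $p=\Theta(n^{\alpha-2})$ there is no off-the-shelf embedding or counting lemma for $(\eps,p)$-regular pairs: sparse regularity controls edge counts between large subsets but does not by itself let you route a path through a prescribed sequence of clusters, let alone one anchored at two \emph{specific} vertices $u_1,u_2$, whose neighborhoods need not be typically distributed over the partition. This is precisely why \cite{AKSV} needs the smoothness hypothesis to transfer density to the cluster graph and why their argument for this theorem is an Andr\'asfai--Erd\H{o}s--S\'os-type analysis of that cluster graph --- which is also where the constant $\frac{2}{5}$ in the hypothesis comes from; your sketch never uses that constant, so you are implicitly attempting the stronger Theorem~\ref{thm:main}, which raises the bar further on the missing step. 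As written, the central claim that paths of every length in a wide window exist between $u_1$ and $u_2$ is asserted rather than proved, so the argument is incomplete at its heart. If you want a regularity-free route, the paper's combination of Lemmas~\ref{lem:linear-size}, \ref{lem:robust-reachability} and \ref{lem:diameter}, plus the final greedy absorption inside $H^*$, is the mechanism that replaces the embedding you are missing.
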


In this paper, we strengthen Theorem~\ref{thm:AKSV-min} by showing that the minimum degree condition can be lowered to $\delta n^{\alpha-1}$ for any given real $\delta>0$ and furthermore,
the condition on smoothness can be relaxed to the following notion.

\begin{definition} \label{dfn:nice}
Let $\alpha, \beta$ be reals with $2>\alpha>\beta\geq 1$. Let $\F$  be a family of bipartite graphs.
We say that $\F$ is {\it $(\alpha,\beta)$-quasi-smooth} with upper density $\rho$ and lower density $\rho_0$,
if there exist constants $\rho,\rho_0> 0$ and $C$ such that for all positive integers $m\leq n$,
\[z(m,n, \F)\leq \rho mn^{\alpha-1}+Cn^\beta \mbox{ and }
\ex(n,\F)\geq \rho_0 n^{\alpha}.\]
Note that the $n^\beta$ term becomes relevant when $m=o(n^{1+\beta-\alpha})$.
If $\F$ consists of a single graph $F$, then we just say that $F$ is $(\alpha,\beta)$-quasi-smooth.
\end{definition}


Clearly every $(\alpha,\beta)$-smooth graph is $(\alpha,\beta)$-quasi-smooth.
However, it is not known if every $(\alpha,\beta)$-quasi-smooth graph is $(\alpha,\beta)$-smooth.
For instance, it is proved that $\ex(n,K_{s,t})=\Omega(n^{2-1/s})$ for $t\geq (s-1)!+1$ in \cite{ARS,KRS} and for $t\geq C^s$ in a very recent paper of Bukh \cite{Bukh} (where $C$ is a constant).
Hence $K_{s,t}$ is quasi-smooth under these conditions, but it is unknown whether $K_{s,t}$ is always smooth.
The following is our main result in this paper.

\begin{theorem}\label{thm:main}
Let $\F$ be an $(\alpha,\beta)$-quasi-smooth family with $2>\alpha>\beta\geq 1$.
For any real $\delta>0$, there exists a positive integer $k_0$ such that
for any odd integer $k\geq k_0$ and sufficiently large $n$, any $n$-vertex $\F\cup\{C_k\}$-free graph with minimum degree at least $\delta n^{\alpha-1}$ is bipartite.
\end{theorem}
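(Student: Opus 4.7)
The plan is to argue by contradiction and refine the sparse-regularity strategy used in \cite{AKSV} for \Cref{thm:AKSV-min}, replacing the Andr\'asfai--Erd\H{o}s--S\'os step (which forces the threshold $\rho(2/5)^{\alpha-1}$) by a sharper bipartiteness theorem whose minimum-degree threshold in the reduced graph can be driven below any prescribed $\delta_0>0$ by choosing $k$ large. Suppose $G$ is $n$-vertex, $\F\cup\{C_k\}$-free, $\delta(G)\ge \delta n^{\alpha-1}$, and not bipartite. From the upper bound $z(m,n,\F)\le \rho m n^{\alpha-1}+O(n^\beta)$ combined with a random bipartition argument, $e(G)=O(n^\alpha)$; applying the same bound to induced subpairs yields upper-uniformity at density $p:=n^{\alpha-2}$. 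Apply Scott's sparse regularity lemma \cite{Scott} with $\eps\ll\eta\ll\delta$, producing an equitable partition $V_1\cup\cdots\cup V_t$ and a reduced graph $R$ whose edges record $\eps$-regular pairs of density at least $\eta p$. A standard counting argument, after discarding $o(n)$ atypical vertices of $G$, leaves $R$ with minimum degree at least $\delta_0 t$ for some $\delta_0=\delta_0(\delta,\rho,\eta)>0$.

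Next I would invoke an Andr\'asfai--H\"aggkvist type theorem: any graph $H$ with $\delta(H)\ge \delta_0|H|$ and odd-girth exceeding $g_0(\delta_0)\sim 2/\delta_0$ is bipartite. Choose $k_0\ge g_0+\lceil 1/(\alpha-1)\rceil+2$. If $R$ is non-bipartite, $R$ contains an odd cycle $\Gamma=i_1 i_2\cdots i_\ell i_1$ of length $\ell\le g_0<k$. Pick typical vertices $v_j\in V_{i_j}$, and apply a path-lengths lemma for sparse regular pairs to find, inside $(V_{i_1},V_{i_2})$, an alternating $v_1$--$v_2$ path of length exactly $k-\ell+1$; this length is odd (since $\ell$ and $k$ are both odd) and exceeds the sparsity threshold $\lceil 1/(\alpha-1)\rceil$ by our choice of $k_0$, so such a path exists for $n$ large. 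Concatenating with the arc $v_2 v_3\cdots v_\ell v_1$ of $\Gamma$ (of length $\ell-1$) produces a cycle of length exactly $k$, contradicting $C_k$-freeness. Hence $R$ is bipartite, with parts $X$ and $Y$.

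If $R$ is bipartite but $G$ is not, there must be some ``bad'' edge $uv\in E(G)$ lying either inside a single cluster, between clusters of the same $R$-part, or in an irregular or sub-$\eta p$ pair. Although upper-uniformity bounds the total number of bad edges by $o(n^\alpha)$, one bad edge already suffices: combine $uv$ with the bipartite scaffolding of regular dense pairs between $X$ and $Y$, use the large minimum degree of $R$ to route alternating paths from $u$ and $v$ through the scaffolding, and close them into a cycle of exact length $k$ with parity adjustment provided by the same path-lengths lemma. This produces a second $C_k$ and the final contradiction.

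The main obstacle is the sparse path-lengths lemma: inside a $(\eps,\eta p)$-regular pair with $p=n^{\alpha-2}$, one must show that most vertex pairs on opposite sides are joined by paths of every odd length in a sufficiently wide range starting at $\lceil 1/(\alpha-1)\rceil$. This is standard when $\alpha$ is near $2$ but delicate as $\alpha\to 1$, and it is the source of the dependence of $k_0$ on $\alpha$ in addition to $\delta$. Upper-uniformity (a consequence of quasi-smoothness, not requiring full smoothness) is used twice: to establish this lemma, and to ensure that the stretched cycles we build never inadvertently create an $\F$-copy in place of the desired $C_k$.
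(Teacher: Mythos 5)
You take the sparse-regularity route of \cite{AKSV}, upgrading the Andr\'asfai--Erd\H{o}s--S\'os step to an ``odd girth larger than $g_0(\delta_0)$ plus minimum degree $\delta_0 t$ implies bipartite'' criterion, and excluding short odd cycles in the reduced graph $R$ by stretching them into a $C_k$. That central idea is sound and is morally parallel to what the paper does, but the paper deliberately avoids regularity, and your argument has a genuine gap at the final step --- precisely the step that regularity cannot handle. When $R$ is bipartite but $G$ is not, the offending edge $uv$ may be incident to vertices over which the regularity cleaning gives no control: a vertex of $G$ has degree only $\delta n^{\alpha-1}=o(n)$, while the exceptional set $V_0$, the atypical vertices, and the clusters meeting many irregular pairs together occupy $\Theta(\sqrt{\eps}\,n)$ vertices. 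So $u$ and $v$ can have \emph{all} of their neighbours inside this garbage set or inside sparse/irregular pairs; the global bound $O(\eps n^{\alpha})$ on badly placed edges says nothing about a single vertex. Consequently ``route alternating paths from $u$ and $v$ through the scaffolding'' has no starting point: nothing connects $u$ to any dense regular pair, and the minimum degree of $R$ is irrelevant to $u$. This per-vertex reachability is exactly what the paper supplies in place of regularity: the maximum spanning bipartite subgraph (Lemma~\ref{lem:bipartite}) together with the expansion and robust reachability lemmas (Lemmas~\ref{lem:linear-size} and~\ref{lem:robust-reachability}) guarantee that \emph{every} vertex, including an endpoint of the bad edge, reaches a linear-sized set by a balanced family of bounded-length paths that can avoid any prescribed small obstruction. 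Without an analogue of this, your contradiction does not close.

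Two earlier steps also need repair. First, $\delta(R)\ge\delta_0 t$ with $\delta_0=\delta_0(\delta,\rho,\eta)$ is not right as stated: in an $\F$-free graph a single pair of clusters can carry up to $\rho m^{\alpha}=\rho t^{2-\alpha}pm^2$ edges, i.e.\ relative density $\rho t^{2-\alpha}\gg 1$, so a cluster's edges can concentrate in as few as $\Theta(t^{\alpha-1})$ neighbouring clusters, and a cluster lying in many irregular pairs can lose most of its degree there. One only gets $\delta_0=\Theta(T(\eps)^{\alpha-2})$ after discarding $O(\sqrt{\eps}\,t)$ further clusters; this is survivable since $T(\eps)$ is a constant once $\eps$ is fixed, but then $g_0$ and $k_0$ depend on $T(\eps)$ and the bookkeeping must be done. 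Second, the sparse path-lengths lemma you lean on is the real content of the AKSV machinery; it does hold for quasi-smooth families because it uses only the upper bound on $z(m,n,\F)$, but you flag it as an unresolved obstacle rather than establishing it. By contrast, the paper replaces all of this with the elementary expansion/diameter argument and a greedy construction of the $C_k$, which is both shorter and applies verbatim under quasi-smoothness.
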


The proof of Theorem~\ref{thm:main} uses expansion properties  and a robust reachability lemma that is in part inspired by a lemma in a recent paper by Letzter \cite{Letzter} on the Tur\'an number of tight cycles.

As a direct application of Theorem~\ref{thm:main}, we also obtain the following strengthening of Theorem~\ref{thm:AKSV-smooth}.

\begin{theorem}\label{thm:improve}
Let $\F$ be an $(\alpha,\beta)$-smooth family with $2>\alpha>\beta\geq 1$.
Then there exists $k_0$ such that for any odd $k\geq k_0$ and sufficiently large $n$,
any $n$-vertex $\F\cup \{C_k\}$-free extremal graph can be made bipartite by deleting a set of $O(n^{1+\beta-\alpha})$ vertices, which together are incident to $O(n^\beta)$ edges.
Therefore, $\ex(n,\F\cup \{C_k\})=z(n,\F)+O(n^\beta)$.
\end{theorem}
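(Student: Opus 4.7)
The plan is to use \Cref{thm:main} as a black box on a subgraph of an extremal graph $G$ obtained by peeling off low-degree vertices, and then translate a tight two-sided edge count into the claimed error bounds. Fix in advance a small constant $\delta>0$ whose value, depending only on $\rho$ and $\alpha$, will be pinned down by the calculations below. Given an $n$-vertex extremal $\F\cup\{C_k\}$-free graph $G$, define a sequence $G=G_0\supseteq G_1\supseteq\cdots$: at step $i$, if some vertex of $G_i$ has degree less than $\delta n_i^{\alpha-1}$ where $n_i:=|V(G_i)|$, delete it; otherwise stop. Write $G'=G_t$ for the final graph, so $|V(G')|=n-t$ and $\delta(G')\geq \delta(n-t)^{\alpha-1}$. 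Provided $n-t$ is still large, \Cref{thm:main} (with the same $\delta$) applies to the $\F\cup\{C_k\}$-free graph $G'$ and forces it to be bipartite.

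The core step is a two-sided edge count using $(\alpha,\beta)$-smoothness. Every extremal bipartite $\F$-free graph is $C_k$-free for odd $k$, so $\ex(n,\F\cup\{C_k\})\geq z(n,\F)=\rho(n/2)^\alpha-O(n^\beta)$, which gives a lower bound on $e(G)$. On the other hand, each deletion discards fewer than $\delta n_i^{\alpha-1}\leq \delta n^{\alpha-1}$ edges, and since $G'$ is bipartite and $\F$-free,
\[
e(G)\;\leq\; e(G')+\delta t\, n^{\alpha-1}\;\leq\; z(n-t,\F)+\delta t\, n^{\alpha-1}\;=\;\rho\bigl((n-t)/2\bigr)^{\alpha}+\delta t\, n^{\alpha-1}+O(n^\beta).
\]
Subtracting the two bounds and using the elementary mean-value estimate $(n/2)^\alpha-((n-t)/2)^\alpha\geq c_\alpha\, t\, n^{\alpha-1}$ for $t\leq n/2$ (with $c_\alpha>0$ depending only on $\alpha$), one obtains $(\rho c_\alpha-\delta)\,t\,n^{\alpha-1}\leq O(n^\beta)$. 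Choosing $\delta<\rho c_\alpha/2$ then yields $t=O(n^{1+\beta-\alpha})$, so that in particular $t=o(n)$, and the edges incident to the deleted vertices total at most $\delta t\, n^{\alpha-1}=O(n^\beta)$. Finally $z(n-t,\F)\leq z(n,\F)+O(n^\beta)$ by smoothness, yielding $\ex(n,\F\cup\{C_k\})=z(n,\F)+O(n^\beta)$.

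The main obstacle I anticipate is closing the bootstrap, since the estimate $t=O(n^{1+\beta-\alpha})$ was derived under the assumption $t\leq n/2$ and $n-t$ large enough for \Cref{thm:main} to apply. I would handle this by a preliminary crude bound: if the deletion process ever reaches $t=n/2$ removals without halting, then the total edges removed so far are at most $\delta\sum_{i=0}^{n/2-1}(n-i)^{\alpha-1}\leq \delta n^\alpha/2$, which for $\delta$ sufficiently small compared to $\rho/2^{\alpha+1}$ contradicts the lower bound $e(G)\geq\rho(n/2)^\alpha-O(n^\beta)$. Hence the process must halt at some $t<n/2$, placing us safely in the regime where \Cref{thm:main} and the sharper accounting above apply. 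Beyond this bootstrap, no new combinatorial ingredient is needed: the argument reduces entirely to \Cref{thm:main}, the definition of smoothness, and elementary calculus.
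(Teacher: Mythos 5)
Your proposal follows essentially the same route as the paper: peel off low-degree vertices, invoke \Cref{thm:main} to conclude the surviving core is bipartite, and run a two-sided edge count (lower bound from extremality and $z(n,\F)$, upper bound from bipartiteness of the core plus the cheap deleted edges), closing with a mean-value estimate to extract $t=O(n^{1+\beta-\alpha})$. The only substantive difference is that you use the shrinking threshold $\delta n_i^{\alpha-1}$ while the paper fixes the threshold at $\delta n^{\alpha-1}$ throughout; the paper's choice makes the accounting slightly cleaner (total removed $\le \delta n^\alpha$ automatically, and the core trivially has minimum degree $\ge \delta m^{\alpha-1}$), but your variant works equally well.

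One step, however, is not justified as written: your bootstrap. If the process reaches $t=n/2$ removals, the fact that the \emph{removed} edges number at most $\delta n^{\alpha}/2$ does not by itself contradict $e(G)\ge \rho(n/2)^\alpha-O(n^\beta)$, since the remaining graph $G_{n/2}$ could in principle still carry all of those edges. You need to also upper-bound $e(G_{n/2})$; this is easy because $G_{n/2}$ is $\F$-free, so its maximum bipartite subgraph gives $e(G_{n/2})\le 2\,z(n/2,\F)\le \tfrac{2\rho}{2^\alpha}(n/4)^\alpha+O(n^\beta)=\tfrac{\rho}{2^\alpha}\cdot 2^{1-\alpha}n^\alpha+O(n^\beta)$, and since $\alpha>1$ this is a constant factor below $\tfrac{\rho}{2^\alpha}n^\alpha$, so adding $\delta n^\alpha/2$ for small $\delta$ still falls short of the lower bound. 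With that one-line patch (and the analogous remark that the surviving core is nonempty and of size $\Omega(n)$, so \Cref{thm:main} indeed applies for large $n$), your argument is complete and matches the paper's.
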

Theorem~\ref{thm:improve} improves Theorem~\ref{thm:AKSV-smooth} in two ways. First, the error term is better. Second, the proof is  more concise and avoids the use of the sparse regularity lemma.
The theorem gives further evidence to an affirmative answer to the question of \cite{AKSV} that whether the extremal $n$-vertex $\F\cup \{C_k\}$-free graph $G$ in Theorem~\ref{thm:AKSV-smooth} is bipartite (for sufficiently large $n$).

We also prove an analogous theorem as Theorem~\ref{thm:main} for $C_{2\ell}$-free graphs, which complements the following result in Keevash-Sudakov-Verstra\"ete \cite{KSV}: For any integer $\ell\geq 2$, odd integer $k\geq 4\ell+1$ and any real $\delta>0$, the chromatic number of any $n$-vertex $\{C_4, C_6,...,C_{2\ell}, C_k\}$-free graph with minimum degree at least $\delta n^{1/\ell}$ is less than $(4k)^{\ell+1}/\delta^\ell$.

\begin{theorem} \label{thm:main2}
Let $\ell\geq 2$ be an integer.
For any real $\delta>0$, let $k_0=3\ell(8\ell/\delta)^\ell+2\ell+2$.
Then for any odd integer $k\geq k_0$ and sufficiently large $n$, any $n$-vertex
$\{C_{2\ell},C_k\}$-free graph with minimum degree at least $\delta n^{1/\ell}$ is bipartite.
\end{theorem}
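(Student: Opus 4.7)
I argue by contradiction: suppose $G$ is an $n$-vertex $\{C_{2\ell},C_k\}$-free graph with $\delta(G)\ge d := \delta n^{1/\ell}$ that is not bipartite, where $k$ is odd and $k\ge k_0 = 3\ell t + 2\ell + 2$ with $t := (8\ell/\delta)^{\ell}$. The aim is to produce a copy of $C_k$ in $G$, yielding the required contradiction.

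The backbone is an \emph{expansion/reachability lemma} tailored to $C_{2\ell}$-free graphs. First, a BFS-expansion: from every vertex $v$ the BFS-ball $B_r(v)$ exhausts more than $n/2$ vertices once $r$ is of order $\ell t$. The proof uses the K\H{o}v\'ari--S\'os--Tur\'an / F\"uredi bound $z(m,N,C_{2\ell})=O(mN^{1-1/\ell})$ applied to the bipartite graph between consecutive BFS-layers: together with $\delta(G)\ge d$, this forces that over any stretch of $\ell$ layers the ball grows by a factor at least of order $d/(8\ell)$, so that $|B_\ell(v)| \ge (d/(8\ell))^\ell = n/t$; iterating, each further stretch of $\ell$ layers adds at least $n/t$ new vertices, so $|B_r(v)| > n/2$ once $r \ge \ell t$. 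I then upgrade to a \emph{robust reachability} statement: for every $r \ge \ell t$ (up to the diameter), a set $R_r(v)\subseteq V(G)$ of size $>n/2$ is reachable from $v$ by a \emph{path} of length exactly $r$. Walks of every length are produced by padding with back-and-forth along an edge or (for parity control) with a detour through any odd cycle of $G$; walks are pruned to paths using $C_{2\ell}$-freeness, since any short repeated sub-walk would produce a forbidden $C_{2\ell}$.

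To construct $C_k$, fix any odd cycle $C^\ast$ of $G$ (guaranteed by non-bipartiteness) and an edge $xy\in E(C^\ast)$; the two arcs of $C^\ast$ between $x$ and $y$ have lengths $1$ and $|C^\ast|-1$, of opposite parities. I select $r_1,r_2\ge \ell t$ with $r_1+r_2+1=k$ and $r_1+r_2$ even so that the concatenated closed walk has length exactly $k$; the hypothesis $k\ge 3\ell t + 2\ell + 2$ provides ample slack for this choice and for the alternative parity via the other arc of $C^\ast$ (whose length can be kept bounded by iteratively shortcutting $C^\ast$ through the reachability lemma). Since $|R_{r_1}(x)|,|R_{r_2}(y)|>n/2$, the two sets intersect in some vertex $z$; concatenating the $x$-$z$ path, the edge $xy$, and the reverse of the $y$-$z$ path yields a closed walk of length exactly $k$ through $x,y,z$.

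The \textbf{main obstacle} is ensuring this closed walk is a genuine \emph{cycle}: the $x$-$z$ and $y$-$z$ paths may share internal vertices or revisit $V(C^\ast)$. Here $C_{2\ell}$-freeness is essential --- any long common segment of the two paths, combined with the edge $xy$, would close into a forbidden $C_{2\ell}$ --- and the quantitative strength of reachability (each $R_{r_i}$ having size $>n/2$) lets me choose $z$ for which the two paths are internally disjoint and miss $V(C^\ast)\setminus\{x,y\}$. The explicit constants in $k_0$ then fall out as follows: the $3\ell t$ term combines the BFS radius $\ell t$ needed from each of the two endpoints plus an $\ell t$ buffer for parity adjustment and path-pruning, while the $+2\ell+2$ tail absorbs the length of the odd cycle $C^\ast$ and a small additive constant.
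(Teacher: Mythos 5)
Your outline has the right general shape (expansion from both ends of a parity-breaking edge, meet in the middle, tune the length to $k$), but the two claims that carry all the weight are asserted rather than proved, and the justifications you offer for them do not work.

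First, the ``upgrade'' from BFS-balls to exact-length reachability is a genuine leap, not a routine step. Padding a walk by going back and forth along an edge, or detouring through an odd cycle, produces \emph{walks} of prescribed length, and pruning a walk down to a path destroys the length you just arranged; there is no forbidden $C_{2\ell}$ created by a walk that merely retraces an edge, so $C_{2\ell}$-freeness gives you nothing here. The statement ``for every $r\ge \ell t$ there are $>n/2$ vertices reachable by a path of length exactly $r$'' is exactly the hard content of the theorem and cannot be taken for granted (in a bipartite graph it is false for half of all $r$, so any proof must locate precisely where non-bipartiteness enters). Second, the disjointness of the $x$--$z$ and $y$--$z$ paths is not forced by $C_{2\ell}$-freeness: two paths sharing interior vertices close up into cycles of many possible lengths, only one of which is forbidden, and the fact that $|R_{r_1}(x)|,|R_{r_2}(y)|>n/2$ does not prevent all the paths to the intersection from funnelling through a few common vertices. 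You use the phrase ``robust reachability'' but the robustness --- a quantitative bound on how many of the paths any single vertex can lie on --- is absent from your argument, and it is precisely what is needed to discard the bad meeting points.

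For comparison, the paper resolves both issues structurally. It passes to a maximum spanning bipartite subgraph $H$ (connected, with half the minimum degree), so that all path lengths have parities determined by the bipartition $(X,Y)$ and the single non-bipartite edge $uv$ with $u,v\in X$ supplies the odd parity exactly once. Exact lengths are obtained not from arbitrary radii but from a tree of paths of length exactly $\ell$ rooted at $u$ (Lemma 3.2(2)), in which no vertex other than $u$ lies on more than $d^{\ell-1}$ paths --- this balancedness is what lets one throw away the few endpoints whose paths meet the connecting path $Q$. The remaining slack in the length is absorbed by a path $T$ of any prescribed even length $t\le k$ built greedily inside a subgraph $H^*$ of minimum degree much larger than $k$, which is where the freedom to hit length exactly $k$ really comes from. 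Without analogues of the balanced path family and the high-minimum-degree absorber, your construction does not close.
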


This proof follows the same line as that of Theorem~\ref{thm:main}, except that
we will use a more efficient robust reachability lemma
for $C_{2\ell}$-free graphs and as a result get better control on $k_0$.

We should point out that the existence of such graphs in Theorem~\ref{thm:main2} is known only for $\ell\in \{2,3,5\}$ (see \cite{FS-survey}).
Also note that this result is not covered by Theorem~\ref{thm:main}, since $C_{2\ell}$ is not known to be $(\alpha,\beta)$-quasi-smooth for any $\ell\geq 3$.
In the concluding remarks, we will mention that Theorems~\ref{thm:main} and \ref{thm:main2} can
be extended to a slightly broader family of bipartite graphs that include both $(\alpha,\beta)$-quasi-smooth
graphs and $C_{2\ell}$'s.

The rest of the paper is organized as follows.
In  \Cref{sec:lemmas}, we develop some useful lemmas.
In \Cref{sec:c2ell}, we develop a lemma for $C_{2\ell}$-free graphs.
In \Cref{sec:main-proof}, we prove Theorem~\ref{thm:main} and Theorem~\ref{thm:main2}, respectively.
In \Cref{sec:improve}, we prove Theorem~\ref{thm:improve}.
In \Cref{sec:conclusion}, we give some concluding remarks.
Throughout this paper, we denote $[k]$ by the set $\{1,2,...,k\}$ for positive integers $k$.


\section{Some general lemmas} \label{sec:lemmas}

The main content of this section is to present key lemmas for our main result Theorem~\ref{thm:main}.

\begin{definition} \label{defn:ell0}
Let $\al,\be$ be reals with $2>\al>\be\geq 1$. Let $\ell_0(\al,\be)$ be defined as follows:
\[\ell_0=\left\lfloor \log_\be \frac{(2-\be)(\al-1)}{\al-\be}\right\rfloor+2, \mbox{ for } \beta>1
\quad \mbox{  and  } \quad \ell_0=\lfloor 1/(\al-1) \rfloor+1,  \mbox{ for } \beta=1.\]
\end{definition}

\begin{lemma}\label{lem:linear-size}
Let $\al,\be$ be reals with $2>\al>\be\geq 1$ and $\ell_0=\ell_0(\al,\be)$ be defined as in Definition~\ref{defn:ell0}.
Let $\F$ be an $(\alpha,\beta)$-quasi-smooth family of bipartite graphs
that satisfies $z(m,n,\F)\leq \rho mn^{\al-1}+Cn^\be$ for all $m\leq n$.
For any $\delta>0$, there exists a positive real $\mu=\mu(\alpha,\beta,\rho,\delta)$ such that
for all sufficiently large $n$ the following is true.
Let $G$ be an $\F$-free bipartite graph with at most $n$ vertices and minimum degree at least $\delta n^{\al-1}$.
Let $u\in V(G)$. For each $i\in \mathbb{N}$, let $N_i(u)$ denote the set of vertices at distance $i$ from $u$.
Then for some $j_0\leq \ell_0$ we have $\min\{|N_{j_0}(u)|,|N_{j_0+1}(u)|\}\geq \mu n$.
\end{lemma}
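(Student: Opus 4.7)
The plan is to run BFS from $u$ with layers $N_i := N_i(u)$ of sizes $n_i := |N_i|$, and exploit two facts. Since $G$ is bipartite, every edge incident to $N_i$ lies in $E(N_{i-1}, N_i) \cup E(N_i, N_{i+1})$, so the minimum degree hypothesis yields
\[e(N_{i-1}, N_i) + e(N_i, N_{i+1}) \;\geq\; \delta n^{\alpha-1} n_i,\]
while each pair $(N_i, N_j)$ forms an $\F$-free bipartite graph, so quasi-smoothness gives the upper bound $e(N_i, N_j) \leq \rho \min\{n_i, n_j\} \max\{n_i, n_j\}^{\alpha-1} + C \max\{n_i, n_j\}^\beta$. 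The whole proof balances these two facts.

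First I would set a preliminary threshold $\mu_1 := (\delta/(4\rho))^{1/(\alpha-1)}$ and prove an inductive growth step: while $n_1 \leq n_2 \leq \cdots \leq n_i \leq \mu_1 n$, the quasi-smooth bound gives $e(N_{i-1}, N_i) \leq \rho n_i^\alpha + C n_i^\beta \leq (\delta/2) n^{\alpha-1} n_i$ (the $\rho$-term is absorbed by the choice of $\mu_1$, the $C$-term because $\beta < \alpha$ and $n$ is large), so $e(N_i, N_{i+1}) \geq (\delta/2) n^{\alpha-1} n_i$. Comparing with the quasi-smooth upper bound on $e(N_i, N_{i+1})$ produces a dichotomy: either the $\rho$-term dominates, giving $n_{i+1} \geq \mu_1 n$ immediately, or the $C$-term dominates, giving $n_{i+1} \geq c\, n^{(\alpha-1)/\beta} n_i^{1/\beta}$ for an explicit $c > 0$ (which for $n$ large in particular preserves the monotonicity $n_{i+1} \geq n_i$). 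Writing $n_i \geq c_i n^{a_i}$, the second branch becomes the recursion $a_{i+1} = (\alpha - 1 + a_i)/\beta$, starting at $a_1 = \alpha - 1$; for $\beta > 1$ this converges monotonically to the fixed point $(\alpha-1)/(\beta-1) > 1$, and a direct computation shows $a_i \geq 1$ as soon as $\beta^{i-1} \geq (2-\beta)(\alpha-1)/(\alpha-\beta)$, namely by $i = \ell_0$ (see \Cref{defn:ell0}). The case $\beta = 1$ is an arithmetic recursion which reaches $a_i \geq 1$ by $i = \ell_0$ as well. Combining both branches of the dichotomy produces a smallest $i_0 \leq \ell_0$ with $n_{i_0} \geq \mu_1 n$; by minimality $n_{i_0-1} < \mu_1 n$, and $i_0 \geq 2$ since $n_1 \geq \delta n^{\alpha-1} = o(n)$.

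The main obstacle is promoting this single large layer to two consecutive large layers. For this I would pick the final threshold $\mu$ of order $\delta \mu_1/\rho$ (a small positive constant for every $\delta > 0$, crucially with no hypothesis like $\delta > \rho$) and split on whether $n_{i_0 - 1} \geq \mu n$: if so, take $j_0 := i_0 - 1$ and we are done. Otherwise $n_{i_0 - 1} < \mu n$, so the quasi-smooth bound gives $e(N_{i_0 - 1}, N_{i_0}) \leq \rho \mu n \cdot n^{\alpha-1} + C n^\beta < (\delta/2)\mu_1 n^\alpha$ by the calibration of $\mu$; subtracting this from the degree lower bound $\delta n^{\alpha-1} n_{i_0} \geq \delta \mu_1 n^\alpha$ on edges incident to $N_{i_0}$ forces $e(N_{i_0}, N_{i_0+1}) \geq (\delta/2)\mu_1 n^\alpha$. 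Inverting the quasi-smooth bound one final time, if $n_{i_0+1} \leq n_{i_0}$ then the term $\rho n_{i_0+1} n_{i_0}^{\alpha-1} \leq \rho n_{i_0+1} n^{\alpha-1}$ must carry the weight, so $n_{i_0+1} \gtrsim (\delta \mu_1/\rho) n \geq \mu n$, while if $n_{i_0+1} > n_{i_0}$ then already $n_{i_0+1} \geq \mu_1 n \geq \mu n$; either way $j_0 := i_0$ works. The delicate point throughout is tracking the dependencies so that the final $\mu = \mu(\alpha, \beta, \rho, \delta)$ is explicit and positive, where the $C$-dependence only enters the "$n$ sufficiently large" threshold since every $Cn^\beta$ term is of strictly lower order than the main $n^\alpha$ terms.
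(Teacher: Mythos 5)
Your proposal is correct and follows essentially the same route as the paper's proof: the same degree-sum versus Zarankiewicz-bound dichotomy, the same exponent recursion $a_{i+1}=(a_i+\alpha-1)/\beta$ whose escape time is exactly what $\ell_0$ in Definition~\ref{defn:ell0} encodes, and the same final step promoting one linear-sized layer to two consecutive ones. The only (cosmetic) differences are that the paper runs the growth argument on balls $B_i$ and extracts the large layer by pigeonhole, whereas you work layer by layer directly; also note your justification that $i_0\geq 2$ is not valid ($\delta n^{\alpha-1}$ is only a lower bound on $d(u)$), but this is harmless since the case $n_{i_0-1}<\mu n$ of your final step covers $i_0=1$ anyway.
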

\begin{proof}
For each $i\in \mathbb{N}$, let $B_i$ denote the set of vertices at distance at most $i$ from $u$.
Let
\begin{equation}  \label{eq:mu-definition}
\gamma = (\delta/12\rho)^{1/(\al-1)} \mbox{ ~and ~}
\mu=\min\left\{(1/2) (\delta/2\rho)^{1/(\al-1)},
(\delta/4\rho)\gamma^{2-\al}, \gamma/\ell_0\right \}.
\end{equation}
First, we show that $|B_{\ell_0}|\geq \gamma n$.
Suppose for a contradiction that $|B_{\ell_0}|<\gamma n$.
Let $i\in [\ell_0-1]$.
Then clearly $|B_i|<\gamma n$, and since $G$ has minimum degree at least $\delta n^{\al-1}$, we have
\begin{equation} \label{eq:deg-sum1}
\sum_{v\in B_i} d(v)\geq \delta n^{\al-1} |B_i|.
\end{equation}
On the other hand, $\sum_{v\in B_i}d(v)=2e(B_i)+e(B_i,B_{i+1}\setminus B_i)$.
Since $G$ is bipartite and $\F$-free, $e(B_i)\leq \max_{(a,b)} \{\rho ab^{\al-1}+Cb^\be\}$ over
all pairs of positive integers $a\leq b$ with $a+b=|B_i|$.
Hence, $e(B_i)\leq \rho|B_i|^\al + C |B_i|^\be\leq 2\rho |B_i|^\al$, when $n$ is sufficiently large.
With some generosity, we can upper bound $e(B_i, B_{i+1}\setminus B_i)$ by $z(|B_i|, |B_{i+1}|,\F)$
to get $e(B_i, B_{i+1}\setminus B_i)\leq \rho |B_i||B_{i+1}|^{\al-1}+C |B_{i+1}|^\be$.
Putting the above estimations all together, we get
\begin{equation} \label{eq:deg-sum2}
\sum_{v\in B_i} d(v)\leq 4\rho |B_i|^\al + \rho |B_i||B_{i+1}|^{\al-1}+C|B_{i+1}|^\be.
\end{equation}
Combining~\eqref{eq:deg-sum1} and \eqref{eq:deg-sum2}, we get
\begin{equation}\label{eq:deg-comparision}
    \delta n^{\al-1}|B_i|\leq \sum_{v\in B_i} d(v)\leq 4\rho |B_i|^\al+\rho|B_i||B_{i+1}|^{\al-1}+C|B_{i+1}|^\be.
\end{equation}
If the first term on the right-hand side of~\eqref{eq:deg-comparision} is the largest term, then
we get $\delta n^{\al-1}|B_i|\leq 12\rho |B_i|^\al$, from which we get $|B_i|\geq (\delta/12\rho)^{1/(\al-1)} n\geq \gamma n$,
contradicting our assumption.
If the second term on the right-hand side of~\eqref{eq:deg-comparision} is the largest term, then
we get $\delta n^{\al-1}|B_i|\leq 3\rho |B_i||B_{i+1}|^{\al-1}$,
from which we get $|B_{i+1}|\geq (\delta/3\rho)^{1/(\al-1)} n\geq \gamma n$ and hence $|B_{\ell_0}|\geq |B_{i+1}|\geq \gamma n$, contradicting
our assumption. Hence we may assume that for each $i\in [\ell_0-1]$, we have
\[  \delta n^{\al-1}|B_i|\leq 3C|B_{i+1}|^\be,\]
which yields that for each $i\in [\ell_0-1]$,
\begin{equation}\label{eq:recurrence}
 |B_{i+1}|\geq (\delta/3C)^{1/\be} n^{(\al-1)/\be} |B_i|^{1/\be}.
 \end{equation}
Let $\{b_i\}$ be a sequence recursively defined by letting $b_1=\al-1$ and $b_{i+1}=(1/\be) b_i+ (\al-1)/\be$ for each $i\geq 1$.
If $\be=1$ then a closed form formula for $b_i$ is $b_i=(\al-1)i$. If $\be>1$ then a closed form formula
for $b_i$ is $b_i= \frac{\al-1}{\be-1} + (\frac{1}{\be})^{i-1}(\al-1-\frac{\al-1}{\be-1})$.
Note that we may assume $C\geq 1$, so $|B_1|\geq \delta n^{\al-1}\geq (\delta/3C)n^{b_1}$.
Then it follows by~\eqref{eq:recurrence} and induction that $|B_i|\geq (\delta/3C)^i n^{b_i}$ for each $i\in [\ell_0]$. However,
using the definition of $\ell_0$ we get $b_{\ell_0}>1$,
which yield $|B_{\ell_0}|>n$ as $n$ is sufficiently large.
This is a contradiction and thus proves that $|B_{\ell_0}|\geq \gamma n$.

Let $j\in [\ell_0]$ be the smallest index such that $|N_j|\geq (\gamma/\ell_0) n$. By the pigeonhole principle, such $j$ exists. By ~\eqref{eq:mu-definition}, $|N_j|\geq \mu n$.
Let $U=N_j$ and $V=N_{j-1}\cup N_{j+1}$. We show that $|V|\geq 2\mu n$, from which it follows
that either $|N_{j-1}|\geq \mu n$ or $|N_{j+1}|\geq \mu n$ and thus the lemma holds with $j_0=j$ or $j_0=j-1$.
Since all the edges of $G$ that are incident to $U$ are between $U$ and $V$, we have $e(G[U,V])\geq \delta n^{\al-1} |U|$.
On the other hand, $G[U,V]$ is $\F$-free. If $|V|\geq |U|$, then we have
$e(G[U,V])\leq \rho |U||V|^{\al-1}+ C|V|^\beta\leq 2\rho |U||V|^{\al-1}$,
where the last inequality holds as $|U|=|N_j|\geq \mu n$ and $n$ is sufficiently large.
Combining the two inequalities and solving for $|V|$, we get $|V|\geq (\delta/2\rho)^{1/(\al-1)} n\geq 2\mu n$, as desired.
Otherwise, we have $|V|\leq |U|$. Then $\delta n^{\al-1}|U|\leq e(G[U,V])\leq \rho |V||U|^{\al-1}+C|U|^\beta$.
Since $C|U|^\beta\ll\delta n^{\al-1}|U|$ for sufficiently large $n$,
we can derive from the above that $\delta n^{\al-1}|U|\leq 2\rho |V||U|^{\al-1}$.
Solving for $|V|$, we have $|V|\geq (\delta/2\rho)n^{\al-1} |U|^{2-\al}\geq (\delta/2\rho) n^{\al-1} (\gamma n)^{2-\al}=(\delta/2\rho)\gamma^{2-\al} n\geq 2\mu n,$
where the last inequality holds by \eqref{eq:mu-definition}, as desired.
\end{proof}

The following lemma, which we call {\it robust reachability lemma} is key to our proof of the main results.
It is inspired by a lemma used in
a recent paper of Letzter \cite{Letzter} on the Tur\'an number of tight cycles in hypergraphs.

\begin{lemma} \label{lem:robust-reachability}
Let $\al,\be$ be reals with $2>\al>\be\geq 1$ and $\ell_0=\ell_0(\al,\be)$ be defined as in Definition~\ref{defn:ell0}.
Let $\F$ be an $(\alpha,\beta)$-quasi-smooth family of bipartite graphs
that satisfies $z(m,n,\F)\leq \rho mn^{\al-1}+Cn^\be$ for all $m\leq n$.
Let $\mu = \mu(\al, \be, \rho, \delta)$ be defined as in Lemma~\ref{lem:linear-size}.
For any real $\delta>0$, the following holds for all sufficiently large $n$.
Let $G$ be an $\F$-free bipartite graph with at most $n$ verticers and minimum degree at least $\delta n^{\al-1}$.
Let $u\in V(G)$. Then there exists a set $S$ of at least $\mu (\al, \be, \rho, \delta/2) n$ vertices, and
a family $\Pa=\{P_v: v\in S\}$, where for each $v\in S$, $P_v$ is a $u,v$-path of length at most $\ell_0$, such that
no vertex except $u$ is used on more than $\frac{n}{\log n}$ of the paths in $\Pa$.
\end{lemma}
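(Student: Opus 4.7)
The plan is to construct the family $\Pa$ one path at a time via a greedy procedure: we maintain a dynamic set $X$ of vertices that have already been used too often, apply Lemma~\ref{lem:linear-size} to $G - X$ at each step to locate many vertices reachable from $u$ by short paths avoiding $X$, and then commit one such path. Write $\mu' := \mu(\alpha,\beta,\rho,\delta/2)$ for the target constant, and initialize $\Pa := \emptyset$ and $S := \emptyset$ (the set that will collect the non-$u$ endpoints of paths added to $\Pa$). At each step let $X \subseteq V(G) \setminus \{u\}$ be the set of vertices that currently lie on at least $\lfloor n/\log n \rfloor$ of the paths in $\Pa$. We iterate as long as $|S| < \mu' n$, and show that in each iteration a new $u,v$-path of length at most $\ell_0$ with $v \notin S$ can be produced inside $G - X$.

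The key quantitative step is a double-counting bound on $|X|$. Each path in $\Pa$ contains at most $\ell_0$ vertices other than $u$, so the total vertex-usage over $\Pa$ is at most $\ell_0 |\Pa| \leq \ell_0 \mu' n$. Since every $w \in X$ accounts for at least $\lfloor n/\log n \rfloor \geq n/(2\log n)$ of these usages for $n$ large, we obtain $|X| \leq 2\ell_0 \mu' \log n$, which is $o(n^{\alpha-1})$ because $\alpha > 1$. Consequently $G - X$ is a bipartite $\F$-free graph on at most $n$ vertices with minimum degree at least $\delta n^{\alpha-1} - |X| \geq (\delta/2) n^{\alpha-1}$ for sufficiently large $n$.

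We may therefore apply Lemma~\ref{lem:linear-size} to $G - X$ with parameter $\delta/2$, which produces some $j_0 \leq \ell_0$ with $|N^{G-X}_{j_0}(u)| \geq \mu' n$; every vertex in this layer is reachable from $u$ in $G - X$ by a path of length exactly $j_0 \leq \ell_0$. As $|S| < \mu' n$ before termination, we can pick $v \in N^{G-X}_{j_0}(u) \setminus S$, take a shortest $u,v$-path $P_v$ in $G-X$ (which automatically avoids $X$), and add $P_v$ to $\Pa$ and $v$ to $S$. After at most $\lceil \mu' n \rceil$ iterations we reach $|S| \geq \mu' n$. Moreover, no vertex $w \neq u$ is ever used on more than $\lfloor n/\log n \rfloor \leq n/\log n$ paths, since as soon as $w$ hits that count it enters $X$ and is excluded from every subsequent path.

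The principal obstacle is conceptual rather than technical: paths are committed irrevocably, so we must rule out the possibility that some vertex is retrospectively forced above its quota. This is handled precisely because $X$ is defined prospectively --- it blocks only future uses --- so once $w$ reaches its cap its count is frozen at that value. A secondary concern is whether deleting $X$ might break the hypothesis of Lemma~\ref{lem:linear-size}; the double-counting bound above shows $|X|$ is only polylogarithmic in $n$, and invoking that lemma with parameter $\delta/2$ (rather than $\delta$) absorbs the minimum-degree loss incurred by the deletion, which is where the $\delta/2$ in the definition of $\mu'$ enters.
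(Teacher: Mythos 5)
Your proposal is correct and is essentially the paper's own argument in iterative clothing: the paper takes a maximum admissible family $\Pa$ and derives a contradiction from $|S|<\mu(\al,\be,\rho,\delta/2)n$, while you build $\Pa$ greedily, but both hinge on the identical double-counting bound showing the set of saturated vertices has size $O(\log n)=o(n^{\al-1})$, followed by applying Lemma~\ref{lem:linear-size} with parameter $\delta/2$ to the graph with those vertices deleted. No gaps; the bookkeeping about prospective versus retrospective saturation is handled correctly.
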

\begin{proof}
Let $S$ be a maximum set of vertices such that there is an associated family $\Pa=\{P_v:v\in S\}$,
where  for each $v\in S$, $P_v$ is a path of length at most $\ell_0$ such that no vertex is on more than $\frac{n}{\log n}$ of the paths.
Let $W$ denote the set of vertices in $G$ (other than $u$)  that lie on exactly $\frac{n}{\log n}$ of the paths $P_v$ in $\Pa$.
Then $|W|\frac{n}{\log n}\leq |S|\ell_0\leq n\ell_0$ and thus $|W|\leq \ell_0\log n<(\delta/2)n^{\al-1}$ for sufficiently large $n$.
Hence, $G-W$ has minimum degree at least $(\delta/2)n^{\al-1}$. If $|S|<\mu(\al, \be, \rho,\delta/2) n$, then by Lemma~\ref{lem:linear-size},
there exists a vertex $z\notin S$ and a $u,z$-path $P_z$ of length at most $\ell_0$ in $G-W$ that we can add to $S$ to contradict our choice of $S$.
Hence $|S|\geq \mu(\al, \be, \rho,\delta/2) n$.
\end{proof}

It is worth noting that the $\frac{n}{\log {n}}$ threshold could be improved to $O(n^{2-\alpha+\eps})$ for any real $\eps>0$, but for simplicity of presentation, we choose to use $\frac{n}{\log {n}}$ and such a choice suffices for the purpose of our main arguments.

The next folklore lemma will be used a few times and we include a proof for completeness. We would like to mention
that it might be easy for one to overlook
the connectedness of $H$ statement in the conclusion. But this condition will play important role in the main proofs.

\begin{lemma}\label{lem:bipartite}
Let $G$ be a connected graph. Let $H$ be a maximum spanning bipartite subgraph of $G$.
Then $H$ is connected and for each $v\in V(G), d_H(v)\geq (1/2) d_G(v)$.
\end{lemma}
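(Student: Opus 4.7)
The plan is to establish the two conclusions separately, with both following from the maximality of $H$ via a local modification of the bipartition. The guiding principle in each case is that, if the desired property failed, one could slightly alter the partition sides and produce a strictly larger bipartite spanning subgraph, contradicting the choice of $H$.

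For the degree bound, I would fix a bipartition $(A,B)$ of $V(G)$ realizing $H$ (so $E(H)$ is exactly the set of $G$-edges between $A$ and $B$), pick an arbitrary vertex $v$, say with $v \in A$, and consider the bipartite subgraph $H'$ obtained from the bipartition $(A \setminus \{v\}, B \cup \{v\})$. Moving $v$ to the other side deletes precisely the $d_H(v)$ edges from $v$ into $B$ and inserts precisely the $d_G(v)-d_H(v)$ edges from $v$ into $A\setminus\{v\}$, so $e(H') = e(H) - d_H(v) + (d_G(v) - d_H(v))$. Maximality of $H$ forces $e(H') \leq e(H)$, which rearranges to $d_H(v) \geq d_G(v)/2$. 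This step is entirely routine.

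The more interesting step is connectedness, which I would prove by contradiction. Suppose $H$ has a component $C$ with induced bipartition $(X_C, Y_C)$, and write $(X, Y)$ for a bipartition of $V(G)$ realizing $H$, so $X_C = X \cap C$ and $Y_C = Y \cap C$. The key observation is that swapping the two sides inside $C$ and leaving the rest untouched yields a new bipartition $\bigl((X\setminus X_C)\cup Y_C,\, (Y\setminus Y_C)\cup X_C\bigr)$ that realizes the \emph{same} subgraph $H$: edges of $H$ inside $C$ still cross the flipped partition, edges of $H$ disjoint from $C$ are unaffected, and by assumption $H$ has no edges between $C$ and $V(G)\setminus C$. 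Now, since $G$ is connected, there exists an edge $uv \in E(G)\setminus E(H)$ with $u\in C$ and $v\notin C$. Exactly one of the two bipartitions (original or flipped) places $u$ and $v$ on opposite sides, because flipping $C$ toggles the side of $u$ while preserving the side of $v$. Adding $uv$ to $H$ under that bipartition yields a bipartite spanning subgraph of $G$ with $e(H)+1$ edges, contradicting maximality.

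The main (and only) point requiring care is verifying that flipping the sides of a single component of $H$ really does keep $H$ bipartite with the same edge set; once this is in hand the rest is a direct case analysis. I do not expect any obstacle beyond this observation.
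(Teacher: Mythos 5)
Your proof is correct and follows essentially the same strategy as the paper's: both conclusions are derived from maximality via a local modification (moving a vertex across the partition for the degree bound, and augmenting $H$ by an edge leaving a component for connectedness). The only cosmetic difference is that for connectedness the paper simply notes that adding a cut edge creates no new cycle and hence preserves bipartiteness, whereas you verify the same fact by explicitly flipping the bipartition within the component; both justifications are valid.
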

\begin{proof}
Let $(X,Y)$ denote a bipartition of $H$. Suppose for contradiction that $H$ is disconnected
and $F$ is a component of $H$. Since $G$ is connected, it contains an edge $e$ joining
$V(F)$ to $V(G)\setminus V(F)$. But then $H\cup e$ is still bipartite, since adding $e$ does not create a new cycle.   Furthermore, $H\cup e$ has
more edges than $H$, contradicting our choice of $H$.

Next, let $v$ be any vertex in $H$. Without loss of generality, suppose $v\in X$.
Suppose $d_H(v)<(1/2)d_G(v)$. Then from $H$ by deleting the edges incident to $x$
and adding the edges in $G$ from $v$ to $X$, we obtained a bipartite subgraph of $G$ that has
more edges than $H$, a contradiction. Hence $\forall v\in V(G), d_H(v)\geq (1/2)d_G(v)$.
\end{proof}

We conclude this section with the following lemma about the diameter.
The {\it diameter} of a graph $G$ is the least integer $k$ such that there exists a path of length at most $k$ between any two vertices in $G$.

\begin{lemma}\label{lem:diameter}
Let $G$ be an $n$-vertex connected graph with minimum degree at least $D$. Then
$G$ has diameter at most $3n/D$.
\end{lemma}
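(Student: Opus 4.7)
The plan is to bound the diameter by studying a shortest path whose length realizes it, and then to exploit the fact that closed neighborhoods of vertices that are sufficiently far apart along such a path must be pairwise disjoint.

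First I would pick two vertices $u,v\in V(G)$ at distance equal to the diameter $d$ of $G$, and fix a shortest $u,v$-path $P=v_0v_1\cdots v_d$ with $v_0=u$ and $v_d=v$. The key structural observation is that if $i,j\in\{0,1,\ldots,d\}$ satisfy $|i-j|\geq 3$, then the closed neighborhoods $N[v_i]$ and $N[v_j]$ are disjoint: otherwise a common vertex $w\in N[v_i]\cap N[v_j]$ would give a walk from $v_i$ to $v_j$ of length at most $2$, which when spliced into $P$ would produce a shorter $u,v$-walk, contradicting that $P$ is shortest.

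Next I would apply this to the indices $0,3,6,\ldots,3\lfloor d/3\rfloor$. These are $\lfloor d/3\rfloor+1$ vertices whose closed neighborhoods are pairwise disjoint, and each closed neighborhood has size at least $D+1$ by the minimum degree hypothesis. Hence
\[
\left(\left\lfloor \frac{d}{3}\right\rfloor+1\right)(D+1)\;\leq\; n,
\]
which rearranges to $d/3\leq \lfloor d/3\rfloor+1\leq n/(D+1)\leq n/D$, so $d\leq 3n/D$, as required.

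There is essentially no obstacle here: the only thing to be careful about is the disjointness argument, which relies crucially on $P$ being a shortest path (not merely a path of length $d$), and on using closed rather than open neighborhoods so that each set has size at least $D+1$ and the two vertices $v_i,v_j$ themselves cannot coincide with a shared neighbor. The counting step is then immediate.
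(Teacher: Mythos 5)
Your proof is correct and follows essentially the same argument as the paper: fix a shortest path realizing the diameter, note that neighborhoods of vertices at index distance at least $3$ along it are pairwise disjoint, and count. The only cosmetic difference is that you use closed neighborhoods (giving the marginally sharper $3n/(D+1)$) where the paper uses open ones.
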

\begin{proof}
Let $x,y$ be two vertices at maximum distance in $G$. Let $v_0v_1\cdots v_\ell$ be a shortest
$x,y$-path in $G$ where $v_0=x$ and $v_\ell=y$. Let $q=\lfloor \ell/3\rfloor$.
Note that $N(v_0), N(v_3), N(v_6),\cdots, N(v_{3q})$ are pairwise disjoint (or else we can find
a shorter $x,y$-path, a contradiction). Hence $n\geq \sum_{i=0}^q |N(v_{3i})| \geq (q+1)D$.
This implies that $(q+1)\leq n/D$ and hence $\ell\leq 3(q+1)\leq 3n/D$.
\end{proof}

\section{An efficient robust reachability lemma for $C_{2\ell}$-free graphs} \label{sec:c2ell}

In this section, we develop a more efficient robust reachability lemma than Lemma~\ref{lem:robust-reachability} for $C_{2\ell}$-free graphs, which may be of independent interest.
We need the following lemma from ~\cite{Verstraete}.

\begin{lemma}[Verstra\"ete \cite{Verstraete}] \label{lem:jacques}
Let $\ell\geq 2$ be an integer and $H$ a bipartite graph of average degree at least $4\ell$ and
girth $g$. Then there exist cycles of at least $(g/2-1)\ell\geq \ell$ consecutive even lengths in $H$. Moreover,
the shortest of these cycles has length at most twice the radius of $H$.
\end{lemma}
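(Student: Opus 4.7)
The plan is to follow the BFS-tree approach of Bondy--Simonovits, as sharpened by Verstra\"ete. First I would reduce to the case of minimum degree at least $2\ell$: by repeatedly deleting any vertex of degree less than $2\ell$, the standard potential argument shows that the subgraph $H'\subseteq H$ so obtained is nonempty, because $H$ has average degree at least $4\ell$. The subgraph $H'$ inherits bipartiteness and has girth at least $g$.

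Next I would pick a vertex $v$ of $H'$ whose eccentricity equals the radius $r$, and build a BFS tree $T$ rooted at $v$ with levels $L_0,L_1,\dots,L_r$. Because $H'$ is bipartite, every non-tree edge joins two consecutive levels; so for each non-tree edge $e=xy$ with $x\in L_i$, $y\in L_{i+1}$, the fundamental cycle $C_e$ of $T+e$ has even length $2(i+1-d(a_e))$, where $a_e$ is the least common ancestor of $x,y$ in $T$ and $d(\cdot)$ is the depth. The girth assumption forces every such length to be at least $g$, while containment in $T+e$ forces it to be at most $2r$.

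The heart of the argument is to realise, through controlled path swaps, a long run of consecutive even lengths among these fundamental cycles. Starting from a fundamental cycle $C_{e_0}$ of some length $2t\geq g$, I would use the minimum degree condition at a vertex of $C_{e_0}$ just above the least common ancestor $a_{e_0}$: among its at least $2\ell-1$ other neighbours, one can choose an alternative sibling branch of $T$ and replace one side of $C_{e_0}$ by a path that is two edges longer or shorter, producing a fundamental cycle of length $2t\pm 2$. The girth hypothesis prevents the new branch from coinciding with the old one before reaching the root, so the resulting cycle is genuinely simple. Iterating this swap and also sliding the swap location up and down through the girth-$g$-deep BFS levels, one produces fundamental cycles of all consecutive even lengths $2t,2t+2,\dots$, at least $(g/2-1)\ell$ in number. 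Since all these cycles sit inside $T\cup\{e\}$ for the corresponding chord $e$, the shortest one has length at most $2r$, proving the \emph{moreover} statement.

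The main obstacle is the book-keeping in the swap step: one must check that every length increment is actually realised and that the swapped paths remain internally vertex-disjoint from the rest of the cycle, so that each modification really yields a cycle and not a non-simple closed walk. This is exactly where both hypotheses are used simultaneously --- the minimum degree $2\ell$ supplies at least $2\ell-1$ sibling options at each crucial vertex, while the girth bound $g$ guarantees that those siblings live on genuinely distinct branches of $T$ and cannot collide prematurely. Tracking these two ingredients carefully is what yields the sharp count of $(g/2-1)\ell$ consecutive even cycle lengths.
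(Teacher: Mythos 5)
First, a point of reference: the paper does not prove this lemma --- it is quoted from Verstra\"ete's paper \cite{Verstraete} and used as a black box --- so your attempt has to be judged against the argument in that reference rather than against anything in the present text.

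Your opening moves are correct and standard: the reduction to minimum degree $2\ell$, the fact that in a bipartite graph every non-tree edge of a BFS tree joins consecutive levels, and the resulting parity and length constraints on fundamental cycles. But the step you yourself call the heart of the argument is where the proof actually lives, and as described it does not work. Choosing one of the $2\ell-1$ other neighbours of a vertex on $C_{e_0}$ hands you a single edge into a sibling branch of $T$; it does not hand you a path along that branch which returns to the rest of $C_{e_0}$, so there is no reason the modification closes up into a cycle at all, let alone one of length exactly $2t\pm 2$. Moreover, the minimum degree of $H'$ is spread over all BFS levels, so it cannot be invoked locally at an arbitrary vertex of an arbitrary fundamental cycle. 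Verstra\"ete's argument instead first averages over the levels: since each edge of the bipartite graph $H'$ lies between exactly one consecutive pair of levels, some pair $(L_i,L_{i+1})$ spans a bipartite layer graph of average degree at least $\ell$; one passes to a component $F$ of a subgraph of large minimum degree there, and then applies a separate path-length lemma to $F$ together with the subtree of $T$ joining $V(F)\cap L_i$ to its common ancestor. The girth enters precisely here: two branches of that subtree cannot meet within $g/2-1$ levels of $L_i$ without creating a short cycle, and this depth is the source of the factor $g/2-1$ in the count $(g/2-1)\ell$. Your sketch never derives this count; it is asserted after ``iterating the swap.''

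A second, smaller gap concerns the ``moreover'' clause. You root the BFS at a centre of $H'$, but $\mathrm{rad}(H')$ can exceed $\mathrm{rad}(H)$, since deleting low-degree vertices may destroy the centre of $H$. As stated, your argument bounds the shortest cycle by twice the radius of $H'$, whereas the lemma (and its application in \Cref{lem:C2ell-free}, where $H$ is a ball of radius $i$) needs twice the radius of $H$. The fix is to run the BFS in $H$ from a centre of $H$ and observe that edges of the subgraph $H'$ still join consecutive levels of that tree.
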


Our lemma is as follows.

\begin{lemma} \label{lem:C2ell-free}
Let $\ell\geq 2$ and $d$ be positive integers.
Let $H$ be a bipartite $C_{2\ell}$-free graph with
minimum degree at least $d$.  Let $u$ be any vertex in $H$.
Then the following items hold.
\begin{enumerate}
\item[(1).] The number of vertices that are at distance at most $\ell$ from $u$ is at least  $(d/4\ell)^\ell$.
\item[(2).]  Suppose $H$ has at most $n$ vertices and $d\geq 15\ell \log n$, where $n$ is sufficiently large.
Then
there is a set $S$ of at least $(1/2)(d/8\ell^2)^\ell$ vertices together with a family
$\Pa=\{P_v: v\in S\}$, where for each $v\in S$, $P_v$ is a $u,v$-path of exactly length $\ell$,
such that no vertex of $H$ except $u$ lies on more than $d^{\ell-1}$ of these paths and each vertex $v$
in $S$ lies only on $P_v$.
\end{enumerate}
\end{lemma}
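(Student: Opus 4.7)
The plan is to handle the two parts separately, each via a BFS argument from $u$ combined with Verstra\"ete's \Cref{lem:jacques}.

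For part (1), let $B_i = N_i(u)$ denote the vertices at distance exactly $i$ from $u$ and $B_{\leq i} = \bigcup_{j \leq i} B_j$. I will prove by induction on $i$ that $|B_{\leq i}| \geq (d/4\ell)^i$ for all $0 \leq i \leq \ell$; the case $i = \ell$ gives the conclusion. For the inductive step, suppose for contradiction that $|B_{\leq i+1}| < (d/4\ell)|B_{\leq i}|$ for some $i < \ell$. Since every vertex of $B_{\leq i}$ has all its $d$ neighbors inside $B_{\leq i+1}$, we get $2e(H[B_{\leq i+1}]) \geq d|B_{\leq i}|$, so the average degree of $H[B_{\leq i+1}]$ exceeds $4\ell$. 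Its radius from $u$ is at most $i+1 \leq \ell$, so by \Cref{lem:jacques} this subgraph contains cycles of at least $\ell$ consecutive even lengths, the shortest of length at most $2(i+1) \leq 2\ell$. In particular $C_{2\ell}$ is present, contradicting $C_{2\ell}$-freeness.

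For part (2), the plan is a maximality argument in the style of \Cref{lem:robust-reachability}. Let $(S, \Pa)$ be a maximum pair satisfying the stated conditions, and suppose for contradiction $|S| < T := (1/2)(d/8\ell^2)^\ell$. Let $W$ denote the set of non-$u$ vertices lying on exactly $d^{\ell-1}$ paths of $\Pa$. Since each $P_v$ has exactly $\ell - 1$ internal vertices, every $v \in S$ appears only on $P_v$, and $u$ lies on every path, counting non-$u$, non-$S$-endpoint appearances gives $|W| \cdot d^{\ell-1} \leq (\ell-1)|S|$, so $|W|$ is tiny. Consider $H' := H - W - (S \setminus \{u\})$, which remains $C_{2\ell}$-free with minimum degree at least $d' := d - |W| - |S|$. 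The hypothesis $d \geq 15\ell \log n$, combined with the tiny bound on $|W|$ and $|S| < T$, forces $d' \geq 8\ell$ via a direct calculation. I then apply a strengthened form of part (1) to $H'$: when the BFS expansion ratio $|B_{\leq j+1}|/|B_{\leq j}|$ exceeds $2$ at every step (valid for $d' \geq 8\ell$), the final layer $B_\ell$ accounts for at least half of the ball, so $|B_\ell(u) \text{ in } H'| \geq (d'/4\ell)^\ell / 2 > 0$. Pick any such vertex $v$; its shortest $u$-$v$ path in $H'$ has length exactly $\ell$ with internal vertices lying entirely in $H'$, hence avoiding $W \cup (S \setminus \{u\})$, and $v \notin S \cup W$. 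Adding $(v, P_v)$ to $(S, \Pa)$ yields a strictly larger valid pair, contradicting maximality.

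The main obstacle is the parameter balancing in part (2): verifying $d' \geq 8\ell$ uniformly over the admissible range of $d$. The target size $T = (1/2)(d/8\ell^2)^\ell$ is chosen to be a factor $(2\ell)^\ell$ smaller than the stronger bound $(d/4\ell)^\ell/2$ from part (1), which provides the slack needed after removing $W \cup (S \setminus \{u\})$; the load threshold $d^{\ell-1}$ is tuned so that $|W|$ stays negligible relative to $|S|$; and the logarithmic lower bound $d \geq 15\ell \log n$ is exactly what is needed to absorb the remaining additive error in the minimum-degree calculation.
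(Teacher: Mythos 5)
Part (1) of your proposal is correct and is essentially the paper's argument: the ball $B_{\leq i+1}$ contains all edges incident to $B_{\leq i}$, so if it did not expand by a factor $d/4\ell$ its average degree would exceed $4\ell$, and \Cref{lem:jacques} (with the radius bound $i+1\leq\ell$) would produce a $C_{2\ell}$.

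Part (2), however, has a genuine gap that I do not see how to repair within your framework. Your maximality argument deletes $W\cup(S\setminus\{u\})$ and asserts that the residual minimum degree $d'=d-|W|-|S|$ is still at least $8\ell$. But the whole point of the lemma is that $|S|$ may be as large as $T=(1/2)(d/8\ell^2)^\ell$, which for $\ell\geq 2$ and $d\geq 15\ell\log n$ (with $n$ large) is of order $d^{\ell}\gg d$; in the intended application $d=\Theta(n^{1/\ell})$ and $|S|=\Theta(n)$. So as soon as $|S|$ exceeds $d$, deleting $S\setminus\{u\}$ annihilates the minimum degree and $d'$ is negative — the ``direct calculation'' you defer to cannot go through. (Only $|W|$ is genuinely tiny; your bound $|W|\leq(\ell-1)|S|/d^{\ell-1}\ll d$ is fine.) Yet you cannot simply keep $S$ in the graph either: the invariant that each $v\in S$ lies \emph{only} on $P_v$ forces the new path to avoid all of $S\setminus\{u\}$ as internal vertices, and moreover the new endpoint must not already be an internal vertex of any existing path, which would require deleting up to $(\ell-1)|S|$ further vertices. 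In short, the set of forbidden vertices grows with $|S|\approx d^{\ell}$ while the degree is only $d$, so a greedy/maximality scheme in the style of \Cref{lem:robust-reachability} (where the forbidden set has size $O(\ell_0\log n)\ll\delta n^{\alpha-1}$) does not transfer to this setting.

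The paper avoids this entirely with a different construction: it randomly partitions $V(H)$ into $\ell$ classes $V_1,\dots,V_\ell$ (Chernoff plus $d\geq 15\ell\log n$ guarantees every vertex keeps degree $\geq d/2\ell$ into each class), then grows an explicit layered tree rooted at $u$ by selecting exactly $d/2\ell$ edges from each reached vertex in $V_i$ into $V_{i+1}$. The expansion argument of part (1), applied inside this tree's union, gives $|S_\ell|\geq(1/2)(d/8\ell^2)^\ell$; and because all paths $P_v$ are root-to-leaf paths of a tree of maximum degree at most $d/2\ell+1$, every non-root vertex automatically lies on at most $(d/2\ell)^{\ell-1}\leq d^{\ell-1}$ of them and every $v\in S_\ell$ is a leaf lying only on $P_v$. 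The load bound and the ``endpoints are clean'' property thus come for free from the tree structure rather than from a deletion argument. You would need to replace your maximality step with something of this kind.
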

\begin{proof} First we prove the first part (1) of the theorem.
Let $B_0=\{u\}$. Consider any $i\in [\ell]$.
Let $B_i$ denote the set of vertices at distance at most $i$ from $u$ in $H$
and $H_i$ the subgraph of $H$ induced by $B_i$.
If $H[B_i]$ has average degree at least $4\ell$, then by Lemma~\ref{lem:jacques}, $G_i$ contains cycles of $\ell$ consecutive even lengths
the shortest of which has length at most $2i\leq 2\ell$ and hence it contains $C_{2\ell}$, contradicting $G$ being $C_{2\ell}$-free.
So for each $i\in [\ell]$, we have $d(H_i)<4\ell$, which implies that
$e(H_i)< 2\ell |B_i|$. On the other hand, $H_i$ contains all the edges of $G$ that are incident to $B_{i-1}$. So $e(H_i)\geq d|B_{i-1}|/2$.
Combining these two inequalities, we get $ 2\ell|B_i|>d|B_{i-1}|/2$. Hence, $|B_i|>(d/4\ell) |B_{i-1}|$ for each $i\in [\ell]$.
Thus, $|B_\ell|\geq (d/4\ell)^\ell$, as desired.

Next, we prove the second part (2).
Let us randomly split the vertices of $G$ into $\ell$ parts $V_1,\dots, V_\ell$. For each vertex $x$, and
each $i\in [\ell]$, the degree $d_i(x)$ of $x$ in $V_i$ has a binomial distribution $\Bin(d(x), 1/\ell)$. Hence, using Chernoff's inequality
(see \cite{Alon-Spencer} or \cite{JLR} Corollary 2.3), we have
\[\mathbb{P} [d_i(x)< (1/2\ell) d(x)]\leq \mathbb{P}[|d_i(x)-(1/\ell) d(x)|>(1/2\ell) d(x)]\leq 2e^{-d(x)/12\ell} =o(n^{-1}),\]
since $d(x)\geq 15\ell \log n$. Hence, for sufficiently large $n$, there exists a splitting of $V(H)$ such that for each $x\in V(H)$ and
for each $i\in [\ell]$, $d_i(x)\geq (1/2\ell) d(x)\geq d/2\ell$. Now, we form a subgraph $H'$ of $H$ as follows.
First, we include exactly $d/2\ell$ of the edges from $u$ to $V_1$. Denote the set of reached vertices in $V_1$ by $S_1$.
Then for each vertex in $S_1$ including exactly $d /2\ell$ edges from it to $V_2$. Denote the set of reached vertices in $V_2$ by $S_2$.
We continue like this till we define $S_\ell$.
Let $B_0=S_0=\{u\}$.
For each $i\in [\ell]$, let $B_i=\bigcup_{j=0}^i S_i$.
and $H_i$ the subgraph of $H'$ induced by $B_i$. Note that $H_i$ has radius $i$.
As in the proof of the first part of the lemma, since $H_i$ is $C_{2\ell}$-free, $e(H_i)< 2\ell |B_i|$.
On the other hand, $H_i$ contains all the edges of $H'$ that are incident to $B_{i-1}$. So $e(H_i)\geq (1/2)(d/2\ell)|B_{i-1}|$.
Combining these two inequalities, we get $ 2\ell|B_i|>(d/4\ell)|B_{i-1}|$.
Hence, $|B_i|>(d/8\ell^2) |B_{i-1}|$ for each $i\in [\ell]$. Thus, $|B_\ell|\geq (d/8\ell^2)^\ell$.

It is easy to see that $\sum_{i=0}^{\ell-1} |S_i|\leq \sum_{i=0}^{\ell-1} (d/2\ell)^i\leq 2(d/2\ell)^{\ell-1}$, when $n$ is sufficiently large.
Hence
\[|S_\ell|=|B_\ell\backslash \cup_{i=0}^{\ell-1}S_i|\geq (d/8\ell^2)^\ell - 2 (d/2\ell)^{\ell-1}>(1/2) (d/8\ell^2)^\ell,\]
where $n$ (and thus $d$) is sufficiently large.
By the definition of $H'$, for each $v\in S_\ell$, there is a path of length $\ell$ from $u$ to $v$ that
intersects each of  $V_1,V_2,\dots, V_\ell$. From the union of these paths one can find a tree $T$ of height $\ell$
rooted at $u$, in which all the vertices in $S_\ell$ are at distance $\ell$ from $u$. Furthermore, by
the definition of $H'$, $T$ has maximum degree at most $(d/2\ell)+1$. For each $v\in S_\ell$, let
$P_v$ be the unique $u,v$-path in $T$. If $x$ is any vertex in $T$ other than $u$, then clearly
$x$ lies on at most $(d/2\ell)^{\ell-1}$ of the paths $P_v$.
Furthermore, each $v\in S_\ell$ doesn't lie on any $P_{w}$ for $w\in S_\ell\setminus \{v\}$.
\end{proof}

\section{Proofs of Theorem~\ref{thm:main} and Theorem~\ref{thm:main2}}\label{sec:main-proof}

Even though the proofs of Theorem~\ref{thm:main} and Theorem~\ref{thm:main2} are essentially the same, there are sufficiently different choices of parameters that we will prove them separately. Before giving the formal proof of Theorem~\ref{thm:main}, we give an overview.
Let $\al,\be$ be reals with $2>\al>\be\geq 1$.
Let $\F$ be an $(\alpha,\beta)$-quasi-smooth family of bipartite graphs
that satisfies $z(m,n,\F)\leq \rho mn^{\al-1}+Cn^\be$ for all $m\leq n$.
Let $\delta>0$ be given. We wish to show that there exists a positive integer $k_0$ such that
for any odd integer $k\geq k_0$ and sufficiently large $n$, any $n$-vertex $\F\cup\{C_k\}$-free graph with minimum degree at least $\delta n^{\alpha-1}$ is bipartite.

Let $k_0$ be sufficiently large as a function of $\alpha,\beta,\rho$, and $\delta$. Let $k\geq k_0$
be an odd integer. Let $G$ be a $n$-vertex
$\F\cup\{C_k\}$-free graph with minimum degree at least $\delta n^{\alpha-1}$. We take a maximum spanning
bipartite subgraph $H$ of $G$. Let $(X,Y)$ be a bipartition of $H$. We show that $G$ is itself bipartite by
showing that $X,Y$ must be independent sets in $G$. Suppose contradiction there exist two vertices $u,v$, say in $X$,
such that $uv\in E(G)$. We derive a contradiction by finding a $C_k$ in $G$ that contains $uv$. To build such a $C_k$,
we utilize expansion properties (as described in Lemma~\ref{lem:linear-size}) and robust reachability properties (as described in Lemma ~\ref{lem:robust-reachability}) of various carefully defined subgraphs of $G$.
First, via a random partitioning argument, we can find a partition of $V(H)=V(G)$ into two subsets $A,B$ such that $H[A]$ has high minimum degree
and each vertex in $A$ has high degree in $B$ (inside $H$). Assume $u\in A$. We then apply Lemma~\ref{lem:robust-reachability} and some additional cleaning
to find a balanced family $\Pa'$ of paths of bounded equal length inside $H[A]$ that start at $u$ and reach a linear-sized subset $S''$ of $A$ with
the additional property that vertices in $S''$ serve only as endpoints of these paths and never as interior points. Since vertices in $S''$
have high degree in $H$ into $B$ and $S''$ is linear-sized, the subgraph of $H$ consisting of edges from $S''$ to $B$ is dense and
contains a subgraph $H''$ of high minimum degree. We then take a shortest path $Q$ in $G$ from $v$ to $V(H'')$ and denote its unique vertex
in $V(H'')$ by $y$. The balanced-ness of $\Pa'$ ensures that most of the paths in $\Pa'$ reaching $S''$ are vertex disjoint from $Q$.
We then apply Lemma~\ref{lem:linear-size} inside $H''$ along with some additional cleaning to find a path in $H''$ of suitable length from $y$
to an appropriate vertex $w^*$ in $S''$. We build a $C_k$ by taking the union of this path with $Q$, the edge $uv$ and the member of $\Pa'$ from $u$
to $w^*$.

\begin{proof}[Proof of Theorem~\ref{thm:main}]
Let $\al,\be$ be reals with $2>\al>\be\geq 1$.
Let $\F$ be an $(\alpha,\beta)$-quasi-smooth family of bipartite graphs
that satisfies $z(m,n,\F)\leq \rho mn^{\al-1}+Cn^\be$ for all $m\leq n$.

Given any real $\delta>0$, we first define $k_0$ as following.
Let $\ell_0=\ell_0(\al,\be)$ as in  Definition~\ref{defn:ell0}.  Let $\mu(\delta) = \mu(\al, \be, \rho, \delta)$ as in the proof of  Lemma~\ref{lem:linear-size}.
Define
\begin{equation} \label{eq:L-definition}
L:=\left\lfloor\frac{3}{\mu(\delta/2)}\right \rfloor \cdot \ell_0 \mbox{ ~and~ } k_0:= 2\ell_0+L+2.
\end{equation}
Let $k\geq k_0$ be odd.
Let $n$ be sufficiently large so that all subsequent
inequalities involving $n$ hold.
Let $G$ be an $n$-vertex $\F\cup \{C_k\}$-free graph with minimum degree at least $\delta n^{\al-1}$.
We may assume that $G$ is connected.
Let $H$ be a maximum bipartite spanning subgraph of $G$. By Lemma ~\ref{lem:bipartite},
$H$ is connected and has minimum degree at least $(\delta/2) n^{\al-1}$.

Since $H$ is $\F$-free, by Lemma~\ref{lem:linear-size}, for each vertex $x$, the set of vertices that are at distance at most $\ell_0$ from $x$ is at least $\mu(\delta/2) n$. Hence by Lemma~\ref{lem:diameter}
(applied to the $\ell_0$-th power $H^{\ell_0}$ of $H$), $H^{\ell_0}$ has diameter at most
$\lfloor \frac{3n}{\mu(\delta/2) n}\rfloor
=\lfloor\frac{3}{\mu(\delta/2)}\rfloor$ and hence $H$ has diameter at most
$\lfloor\frac{3}{\mu(\delta/2)}\rfloor\cdot \ell_0=L$, as defined in~\eqref{eq:L-definition}.

Let $(X,Y)$ be the unique bipartition of $H$. We show that $G$ is also bipartite with $(X,Y)$ being
a bipartition of it. Suppose otherwise.
We may assume, without loss of generality, that there exist two vertices $u,v\in X$ such that $uv\in E(G)$. We will derive a contradiction by finding a copy of $C_k$ in $G$ that contains $uv$.

Let us randomly split $V(H)$ into two subsets $A,B$. For each vertex $x$ of degree $d(x)$ in $H$,
let $d_A(x)$ and $d_B(x)$ denote the degree of $x$ in $A$ and $B$, respectively.
Then both $d_A(x)$ and $d_B(x)$ satisfy the binomial distribution $\Bin(d(x), 1/2)$. Hence,
by Chernoff's inequality,  we have\[\mathbb{P}(d_A(x)< (1/4)d(x))\leq \mathbb{P}(|d_A(x)-d(x)/2|\geq d(x)/4)\leq 2e^{-d(x)/24} =o(n^{-1}),\]
since $d(x)\geq (\delta/2)n^{\al-1}$ and $n$ is sufficiently large.
Hence with positive probability we can ensure that for any $x\in V(H)$, $\min\{d_A(x), d_B(x)\}\geq (1/4)d(x)\geq (\delta/8) n^{\al-1}$.
Let us fix such a partition $A,B$ of $V(H)$.

Without loss of generality, suppose that the vertex $u$ is in $A$.
Let $H[A]$ denote the subgraph of $H$ induced by $A$. By our discussion above, $H[A]$ has minimum degree at least $(\delta/8) n^{\al-1}$. By Lemma~\ref{lem:robust-reachability}, there exists
a set $U$ of at least $\mu(\delta/16) n$ vertices and a family $\Pa=\{P_z: z\in U\}$, where
for each $z\in U$, $P_z$ is a $u,z$-path in $H[A]$ of length at most $\ell_0$ and no vertex in $H$ lies on more than $n/\log n$ of these paths $P_z$.
By the pigeonhole principle, there exists a value $p\in [\ell_0]$ and a subset $S\subseteq U$ of size
\[|S|\geq |U|/\ell_0\geq (\mu(\delta/16)/\ell_0) n\] such that for each $z\in S$, $P_z \in \Pa$ has length $p$.
Now, let us randomly color the vertices in $H[A]$ with colors $1$ and $2$.
For each $z\in S$, the path $P_z$ is {\it good} if $z$ is colored $2$ and the other $p$ vertices on $P_z$ are colored $1$.
The probability that $P_z$ is good is $1/2^{p+1}$.
Hence, for some coloring, there are at least $|S|/2^{p+1}$ good paths. Let $S'$ denote the subset of vertices $z\in S$
such that $P_z$ is good and let $\Pa'=\{P_z:z\in S'\}$. By our discussion,
\[|\Pa'|=|S'|\geq \left(\mu(\delta/16)/(2^{\ell_0+1}\ell_0)\right)n.\]
Note that by our definition of $\Pa'$, no vertex in $S'$ is used as an internal vertex of any path in $\Pa'$.
By our earlier discussion, each vertex in $S'$ has at least $(\delta/8)n^{\al-1}$ neighbors in $B$. Let $H'$ be the subgraph of $H$ whose edge set contains all the edges in $H$ from $S'$ to $B$. Therefore,  $H'$ is bipartite with partition $S'$ and $B$. Furthermore,
\[e(H')\geq |S'|(\delta/8) n^{\al-1}\geq \left(\mu(\delta/16)\delta/(2^{\ell_0+4}\ell_0)\right) n^\al =\gamma n^\al,\]
where $\gamma:=\gamma(\delta)=\mu(\delta/16) \delta/(2^{\ell_0+4}\ell_0)$.
So $H'$ has average degree at least $2\gamma n^{\al-1}$.
By a well-known fact, $H'$ contains a subgraph $H''$ of minimum degree at least $\gamma n^{\al-1}$.
Let $S''=V(H'')\cap S'$.

Let $Q$ be a shortest path in $H$ from the vertex $v$ to $V(H'')$. Let $y$ be the endpoint of $Q$ in $V(H'')$.
By our choice of $Q$, $y$ is the only vertex in $V(Q)\cap V(H'')$ (note that it is possible that $y=v$).
Let $q$ denote the length of $Q$. Since $H$ has diameter at most $L$, we have $q\leq L$.

By Lemma~\ref{lem:linear-size}, for some $j_0\leq \ell_0$, inside the graph $H''$ we have $\min\{|N_{j_0}(y)|,|N_{j_0+1}(y)|\}\geq \mu( \gamma) n$.
Note that one of $N_{j_0}(y)$ and $N_{j_0+1}(y)$ lies completely
inside $V(H'')\cap A$. Denote this set by $W$. Let $$W_0=\{w\in W: P_w\cap V(Q)\neq \emptyset\}.$$
Since $Q$ contains at most $L+1$ vertices, each of which lies on at most $n/\log n$ of the members of $\Pa$, we see $|W_0|\leq (L+1)(n/\log n)$.
Since $n$ is sufficiently large, we have
\[|W\setminus W_0|\geq \mu(\gamma) n - (L+1)n/\log n\geq (1/2)\mu(\gamma) n.\]
Let $H[W\setminus W_0,B]$ denote the subgraph of $H$ consisting of edges that have one endpoint in $W\setminus W_0$ and the other endpoint in $B$.
Since each vertex in $W\setminus W_0$ has at least $(\delta/8) n^{\al-1}$ neighbors in $B$,
\[e(H[W\setminus W_0, B])\geq |W\setminus W_0| (\delta/8) n^{\al-1}
\geq (1/16)\mu(\gamma)  \delta n^\al.\]
Hence $H[W\setminus W_0,B]$ contains a subgraph $H^*$ with minimum degree at least $(1/16)\mu(\gamma)\delta n^{\al-1} \geq k$, for sufficiently large $n$.
Let $w$ be any vertex in $V(H^*)\cap (W\setminus W_0)$.
By our definition of $W$, there is a path $R_w$ in $H''$ of length
$r\leq j_0+1\leq \ell_0+1$ from $y$ to $w$. Let $t=k-1-q-r-p$.
Since $q\leq L, p\leq \ell_0, r\leq \ell_0+1$ and $k\geq k_0= 2\ell_0+L+2$, we get $t\geq 0$.
Since there is a path in $H$ of length $p$ from $w$ to $u$ (by the definition of $S''\subseteq W$). Since $V(Q)\cap V(H'')=\{y\}$, $R_w\cup Q$ is a path in $H$
 of length $q+r$ from $w$ to $v$ and $u,v\in X$, $p$ and $q+r$ have the same parity. Since $k$ is odd, we see that $t$ is even.
 Since $H^*$ has minimum (much) larger than $k$, greedily we can build a path $T$ of length $t$ in $H^*$ from $w$ to some vertex $w^*$
 in $V(H^*)\cap (W\setminus W_0)$ such that $T$ intersects $Q\cup R_w$ only in $w$.  Now, let
 \[C:=uv\cup Q\cup R_w\cup T\cup P_{w^*}\]
By our definitions, $Q\cup R_w\cup T$ is a path. Also, since $w^*\in W\setminus W_0$,
$P_{w^*}$ is vertex disjoint from $Q$. Finally, by our definition of $\Pa'$, $V(P_{w^*})\setminus \{w^*\}$ is
disjoint from $S'$ and hence from $S''$. It is certainly also disjoint from $B$ and hence is
vertex disjoint from $R_w\cup T$. So, $C$ is a cycle in $G$ of length $1+q+r+t+p=k$, a contradiction.
This proves Theorem~\ref{thm:main}.
\end{proof}

\bigskip

\begin{proof} [Proof of Theorem~\ref{thm:main2}]
Let $\ell\geq 2$ be an integer. Let $\delta>0$ be a real. Define
\begin{equation} \label{eq:L-definition2}
L:=3\ell (8\ell/\delta)^{\ell} \mbox{ ~and~ } k_0:= 2\ell+L+2.
\end{equation}
Let $k\geq k_0$ be an odd integer.
Let $n$ be sufficiently large so that all subsequent
inequalities involving $n$ hold.
Let $G$ be an $n$-vertex $C_{2\ell}$-free graph with minimum degree at least $\delta n^{1/\ell}$.
We may assume that $G$ is connected.
Let $H$ be a maximum spanning subgraph of $G$. By Lemma~\ref{lem:bipartite},
$H$ is connected with minimum degree at least $(\delta/2) n^{1/\ell}$.
Since $H$ is $C_{2\ell}$-free,
by Lemma~\ref{lem:C2ell-free}, the $\ell$-th power $H^{\ell}$ of $H$ has minimum degree at least
$(\delta n^{1/\ell}/8\ell)^{\ell} =(\delta/8\ell)^{\ell} n$. Hence, by Lemma~\ref{lem:diameter},
$H^\ell$ has diameter at most $3n/[(\delta/8\ell)^{\ell} n]=3 (8\ell/\delta)^{\ell}$. Therefore, $H$ has
diameter at most $3\ell (8\ell/\delta)^{\ell}= L$, as defined in~\eqref{eq:L-definition2}.

Let $(X,Y)$ the unique bipartition of $H$. We show that $G$ is also bipartite with $(X,Y)$ being
a bipartition of it. Suppose otherwise. Then without loss of generality, we may assume that there exist two vertices $u,v\in X$ such that $uv\in E(G)$. We will derive a contradiction by finding a copy of $C_k$ in $G$ that contains $uv$.
As in the proof of Theorem~\ref{thm:main}, we can
split $V(H)$ into two subsets $A,B$ such that for
each vertex
$x\in V(H)$, we have $d_A(x), d_B(x)\geq (1/4)d(x)\geq (\delta/8) n^{1/\ell}$.

Without loss of generality, suppose $u\in A$. Let $H[A]$ denote the subgraph of $H$ induced by $A$. By our discussion above, $H[A]$ has minimum degree at least $(\delta/8) n^{1/\ell}$. By Lemma~\ref{lem:C2ell-free} (with $d=(\delta/8) n^{1/\ell}$), there exists
a set $S$ of size at least $(\delta/64\ell^2)^\ell (n/2)$ and a family $\Pa=\{P_z: z\in S\}$, where
for each $z\in S$, $P_z$ is a $u,z$-path in $H[A]$ of length $\ell$, such that no vertex other than $u$ in $H$ lies on
more than $(\delta n^{1/\ell}/8)^{\ell-1}=(\delta/8)^{\ell-1} n^{1-1/\ell}$ of these paths. Furthermore, for each $z\in S$,
$z$ lies only on $P_z$.

Let $H[S,B]$ denote the bipartite subgraph of $H$ induced by the two parts $S$ and $B$.
By our earlier discussion, each vertex in $S$ has at least $(\delta/8)n^{1/\ell}$ neighbors in $B$. Hence,
\[e(H)\geq |S|(\delta/8) n^{1/\ell}\geq (\delta^{\ell+1}/2^{6\ell+4}\ell^{2\ell}) n^{1+1/\ell}=\gamma n^{1+1\ell},\]
where $\gamma:=\delta^{\ell+1}/2^{6\ell+4}\ell^{2\ell}$.
Then $H[S,B]$ has average degree at least $2\gamma n^{1/\ell}$ and thus contains a subgraph $H'$ of minimum degree at least $\gamma n^{1/\ell}$.
Let $S'=V(H')\cap S$ and $B'=V(H')\cap B$.

If $\ell$ is even, then $S'\subseteq X$ and let $Q$ be a shortest path in $H$ from $v$ to $S'$.
If $\ell$ is odd, then $B'\subseteq X$ and let $Q$ be a shortest path in $H$ from $v$ to $B'$.
Let $y$ denote the endpoint of $Q$ opposing $v$ (it is possible that $y=v$).
Let $q$ denote the length of $Q$.
So $q$ is even and $y\in X$.
In either case, it is easy to see that $q\leq L+1$
and that $V(H')$ contains $y$ and at most one other vertex of $Q$.
Hence $H'-(V(Q)\setminus \{y\})$ has minimum degree at least
$\gamma n^{1/\ell}-1\geq (\gamma/2)n^{1/\ell}$.
By Lemma~\ref{lem:C2ell-free} (with $d=(\gamma/2)n^{1/\ell}$), inside the graph $H'-(V(Q)\setminus \{y\})$  there is a set $W$ of size at least $(1/2)(\gamma/16\ell^2)^\ell n$
such that for each $w\in W$ there is a path $R_w$ of length $\ell$ from $y$ to $w$ in $H'-(V(Q)\setminus \{y\})$.
Furthermore, by our definition of $Q$, we can get $W\subseteq S'$.
Recall the paths $P_w$ in $\Pa$.
Let $$W_0=\{w\in W: P_w\cap V(Q)\neq \emptyset\}.$$
Since $Q$ contains at most $L+1$ vertices each of which lies on at most $(\delta/8)^{\ell-1} n^{1-1/\ell}$
of the members of $\Pa$, we see $|W_0|\leq (L+1)(\delta/8)^{\ell-1} n^{1-1/\ell}$.
Since $n$ is sufficiently large, we have
\[|W\setminus W_0|\geq (1/2)(\gamma/16\ell^2)^\ell n -  (L+1)(\delta/8)^{\ell-1} n^{1-1/\ell} \geq
(1/4)(\gamma/16\ell^2)^\ell n.\]
Since each vertex in $W\setminus W_0$ has at least $(\delta/8) n^\ell$ neighbors in $B$,
\[e(H[W\setminus W_0, B])\geq |W\setminus W_0| (\delta/8) n^{1/\ell}\geq (\delta\cdot \gamma^\ell)/(2^{4\ell+5}\cdot \ell^{2\ell}) n^{1+1/\ell}.\]
Hence $H[W\setminus W_0,B]$ contains a subgraph $H^*$ with minimum degree at least
$(\delta\cdot \gamma^\ell)/(2^{4\ell+5}\cdot \ell^{2\ell}) n^{1/\ell}\geq k$, for sufficiently large $n$.
Let $w$ be any vertex in $V(H^*)\cap (W\setminus W_0)$.
By our definition of $W$, there is a path $R_w$ in $H'-(V(Q)\setminus \{y\})$ of length
$\ell$ from $y$ to $w$. Let $t=k-1-q-2\ell$. Since $q\leq L+1$ and $k\geq k_0= 2\ell+L+2$, we see $t\geq 0$.
Since $k$ is odd and $q$ is even, we also see that $t$ is even.
Since $H^*$ has minimum degree (much) larger than $k$, greedily we can build a path $T$ of length $t$ in $H^*$ from $w$ to some vertex $w^*$
 in $V(H^*)\cap (W\setminus W_0)$ such that $T$ intersects $Q\cup R_w$ only in $w$.  Now, let
 \[C:=uv\cup Q\cup R_w\cup T\cup P_{w^*}\]
By our definition of $T$, $Q\cup R_w\cup T$ is path. Also, since $w^*\in W\setminus W_0$,
$P_{w^*}$ is vertex disjoint from $Q$. Finally $P_{w^*}\setminus \{w^*\}$ does not contain any vertex of $S'\cup B$ and
hence is vertex disjoint from $R_w\cup T$. Hence, $C$ is a cycle in $G$ of length $1+q+2\ell+t=k$, a contradiction.
\end{proof}


\section{Proof of Theorem~\ref{thm:improve}} \label{sec:improve}

\begin{proof}[Proof of Theorem~\ref{thm:improve}]
Let $2>\alpha>\beta\geq 1$ and $\F$ be an {\it $(\alpha,\beta)$-smooth} family with relative density $\rho$.
Then by the remark after Definition~\ref{dfn:smooth}, there exist constants $C_1<C_2$ such that for sufficiently large  $n$, $$\rho(n/2)^\alpha+C_1n^\beta\leq z(n,\F)\leq \rho(n/2)^\alpha+C_2n^\beta.$$
Fix $\delta:=\rho/2^{\alpha+3}$.
Let $k_0$ be from Theorem~\ref{thm:main} such that for any odd $k\geq k_0$ and sufficiently large $m$, any $m$-vertex $\F\cup\{C_k\}$-free graph with minimum degree at least $\delta m^{\alpha-1}$ is bipartite.

Now consider any odd $k\geq k_0$ and sufficiently large $n$. Let $G$ be an $n$-vertex extremal $\F\cup \{C_k\}$-free graph.
Then
\begin{equation}\label{eq:O(beta)-1}
e(G)=\ex(n,\F\cup \{C_k\})\geq z(n,\F)\geq \frac{\rho}{2^\alpha}n^\alpha+C_1n^\beta.
\end{equation}
Let $G_0=G$.
If there exists some vertex $x$ of degree less than $\delta n^{\alpha-1}$ in $G_0$, then we delete the vertex $x$ and rename the remaining subgraph as $G_0$.
We repeat the above process until there is no such vertex in $G_0$. Let
 $H$ denote the remaining induced subgraph of $G$ and let $t=n-|V(H)|$.
We note that as $\alpha<2$ and $n$ is sufficiently large, using \eqref{eq:O(beta)-1} and $\delta=\frac{\rho}{2^{\al+3}}$,
\begin{equation}\label{eq:O(beta)-2}
e(H)\geq e(G)-t\cdot \delta n^{\alpha-1}\geq \left(\frac{\rho}{2^\alpha}n^\alpha+C_1n^\beta\right)-n\cdot \delta n^{\alpha-1}=\frac{7}{8}\frac{\rho}{2^\alpha} n^\alpha +C_1n^\beta.
\end{equation}
Let $m=|V(H)|$.
By definition, $H$ is an $m$-vertex $\F\cup\{C_k\}$-free graph with minimum degree at least $\delta n^{\alpha-1}\geq \delta m^{\alpha-1}$. By taking $n$ sufficiently large, we can make $m$ large enough to apply Theorem~\ref{thm:main} to
conclude that $H$ is bipartite. Since $H$ is also, $\F$-free, we have
\begin{equation} \label{eq:H-upper}
e(H)\leq z(m,\F)=z(n-t,\F)\leq \frac{\rho}{2^\alpha} (n-t)^\alpha +C_2 n^\beta.
\end{equation}
Comparing \eqref{eq:O(beta)-2} and ~\eqref{eq:H-upper}, we see that $n-t\geq \frac{3}{4}n$. By
\eqref{eq:O(beta)-1}, \eqref{eq:O(beta)-2} and ~\eqref{eq:H-upper}, we have
$$\left(\frac{\rho}{2^\alpha}n^\alpha+C_1n^\beta\right)-t\cdot \delta n^{\alpha-1}\leq \frac{\rho}{2^\alpha}(n-t)^\alpha+C_2 n^\beta
$$

Recall that $\delta=\rho/2^{\alpha+3}$.
Rearranging the above inequality, we get

$$\frac{\rho}{2^\alpha}n^\alpha-\frac{\rho}{2^\alpha}(n-t)^\alpha-t\cdot \delta n^{\alpha-1}\leq (C_2 - C_1)n^\beta. $$
By the Mean Value Theorem, for some $n - t \leq n' \leq n$, this is equivalent to  $$\frac{\rho\alpha}{2^\alpha}t(n')^{\alpha -1}-t\cdot \delta n^{\alpha-1}\leq (C_2 - C_1)n^\beta.$$ Since $n-t\geq \frac{3}{4}n$ and
$\frac{\rho\alpha}{2^{\alpha}}(\frac{3}{4})^{\alpha -1} > 2\cdot \frac{\rho}{2^{\alpha+3}}$, the above inequality yields
\[\frac{\rho}{2^{\alpha+3}}tn^{\alpha - 1} \leq (C_2 - C_1)n^\beta.\]
So, $t=O(n^{1+\beta-\alpha})$ and
in obtaining $H$ from $G$ at most $t\cdot \delta n^{\alpha-1}=O(n^\beta)$ edges are removed.
\end{proof}

\section{Concluding Remarks} \label{sec:conclusion}

\begin{enumerate}
    \item As mentioned in the introduction, there are bipartite graphs that are not smooth. We give such an example here.
    Given integers $t,\ell\geq 2$, the {\it theta graph}
    $\theta_{t,\ell}$ is the graph consisting of $t$ internally disjoint paths of length $\ell$ between two vertices.
    In particular, we have $\theta_{2,\ell}=C_{2\ell}$.
    Faudree and Simonovits \cite{FS} showed that for all $t,\ell\geq 2$, $\ex(n,\theta_{t,\ell})=O(n^{1+1/\ell})$ (the case $t=2$ was first proved by Bondy-Simonovits \cite{BS}).
    Conlon~\cite{conlon-theta} showed that for each $\ell\geq 2$, there exists a $t_0$ such that for all $t\geq t_0$, $\ex(n,\theta_{t,\ell})=\Omega(n^{1+1/\ell})$,
    the leading coefficients of which were further improved by Bukh-Tait \cite{BT}.
    Jiang, Ma and Yepremyan~\cite{JMY} showed that
    for all $t,\ell$, there exists a constant $c=c(t,\ell)$ such that
    for all $m\leq n$
    \begin{equation*}
    z(m,n,\theta_{t,\ell})\le \left\{\begin{array}{ll}
    c\cdot [(mn)^{\frac{\ell+1}{2\ell}}+m+n] & \text{if } \ell \text{  is odd}, \\
    c\cdot [m^{\frac{1}{2}+\frac{1}{\ell}}n^{\frac{1}{2}}+m+n] & \text{if } \ell \text{ is even}.
  \end{array}\right.
\end{equation*}
For the case $t=2$, the above bound was first proved by Naor-Verstra\"ete \cite{NV}, and a different form of the upper bound on $z(m,n,\theta_{2,\ell})$ was obtained by Jiang-Ma \cite{JM}.
On the other hand, using first moment deletion method it is not hard to show that
\begin{proposition}\label{prop:theta-lower}
Let $\eps>0$ be any real. Let $\ell\geq 2$. There exists a $t_0$ such that for all $t\geq t_0$, if $\ell$ is odd then
\[z(m,n,\theta_{t,\ell})\geq \Omega(m^{\frac{\ell+1}{2\ell}-\eps} n^{\frac{\ell+1}{2\ell}-\eps}),\]
and if $\ell$ is even then
\[z(m,n,\theta_{t,\ell})\geq \Omega(m^{\frac{1}{2}+\frac{1}{\ell}-\eps} n^{\frac{1}{2}-\eps}).\]
\end{proposition}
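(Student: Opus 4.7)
The plan is to apply the standard random deletion (first moment) method. We take a random bipartite graph $\Gamma$ on vertex parts $M, N$ of sizes $m, n$ (where $m \leq n$) in which every possible edge appears independently with some probability $p$, compute the expected number of edges and the expected number of copies of $\theta_{t,\ell}$, delete one edge from each copy, and choose $p$ so that what remains is a $\theta_{t,\ell}$-free bipartite graph of the desired size.

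First I would carefully count embeddings of $\theta_{t,\ell}$ into a bipartite graph on $(M,N)$. The graph $\theta_{t,\ell}$ has $t\ell$ edges and $2+t(\ell-1)$ vertices, and it contains two poles shared by all $t$ internally disjoint $\ell$-paths. If $\ell$ is odd, every such path has endpoints in different parts, so one pole lies in $M$ and the other in $N$, and each path contributes $(\ell-1)/2$ internal vertices to each part. This gives at most $O\bigl(m^{1+t(\ell-1)/2} n^{1+t(\ell-1)/2} p^{t\ell}\bigr)$ expected copies. If $\ell$ is even, both poles lie in the same part, and one checks that, under the assumption $m \leq n$, the dominant embedding type has both poles in $M$, producing at most $O\bigl(m^{2+t(\ell/2-1)} n^{t\ell/2} p^{t\ell}\bigr)$ expected copies.

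Next I would choose $p$ to satisfy $p m n \geq 2\,\mathbb{E}[\#\theta_{t,\ell}]$ so that after removing one edge per bad copy the graph is $\theta_{t,\ell}$-free and still has at least $p m n / 2$ edges. Solving the inequality in the odd case gives
\[
p = \Theta\!\left(m^{-\frac{t(\ell-1)/2}{t\ell-1}}\,n^{-\frac{t(\ell-1)/2}{t\ell-1}}\right),
\qquad
pmn = \Omega\!\left(m^{1-\frac{t(\ell-1)/2}{t\ell-1}}\,n^{1-\frac{t(\ell-1)/2}{t\ell-1}}\right),
\]
and in the even case
\[
p = \Theta\!\left(m^{-\frac{1+t(\ell/2-1)}{t\ell-1}}\,n^{-\frac{t\ell/2-1}{t\ell-1}}\right),
\qquad
pmn = \Omega\!\left(m^{1-\frac{1+t(\ell/2-1)}{t\ell-1}}\,n^{1-\frac{t\ell/2-1}{t\ell-1}}\right).
\]
A direct computation shows each exponent differs from its intended limit ($\frac{\ell+1}{2\ell}$ in the odd case; $\frac{1}{2}+\frac{1}{\ell}$ for $m$ and $\frac{1}{2}$ for $n$ in the even case) by an $O(1/t)$ term. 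Hence for every prescribed $\eps>0$ we can pick $t_0 = t_0(\ell,\eps)$ large enough that for all $t \geq t_0$ the error is smaller than $\eps$, yielding the claimed lower bounds on $z(m,n,\theta_{t,\ell})$.

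The only real step that needs attention is justifying the embedding count: one must sum over every way to choose two ordered poles plus the $t(\ell-1)$ internal vertices (with the obvious multinomial factor depending on the parity of $\ell$), then note that the number of unordered copies is this count up to a constant depending only on $t$ and $\ell$, which is absorbed into the $\Omega$. The rest is mechanical. The main (mild) obstacle is bookkeeping the exponents as functions of $t$ and verifying that, under $m \leq n$, the embedding type whose pole configuration I singled out really does dominate the expectation; once that is in hand, the random deletion argument goes through with no further complications.
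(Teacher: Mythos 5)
Your proposal is correct and follows essentially the same route as the paper's proof: a first-moment deletion argument on the random bipartite graph $G(m,n,p)$, bounding the expected number of copies of $\theta_{t,\ell}$ by the parity-dependent pole/internal-vertex split, choosing $p$ so that $\mathbb{E}[e(G)]\geq 2\,\mathbb{E}[\#\theta_{t,\ell}]$, and observing that the resulting exponents differ from $\tfrac{\ell+1}{2\ell}$ (resp.\ $\tfrac12+\tfrac1\ell$ and $\tfrac12$) by $O(1/t)$. Your even-$\ell$ vertex count ($2+t(\ell/2-1)$ poles-and-internals in $M$, $t\ell/2$ in $N$) is in fact the careful version of the bookkeeping the paper only sketches, and the conclusion is the same.
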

\begin{proof}
Consider the bipartite random graph $G\in G(m,n,p)$ with $p$ to be chosen later.
Let $q=\lfloor \ell/2\rfloor$.
Let $X=e(G)$ and $Y$ denote the number of copies of $\theta_{t,k}$ in $G$.
We have $\mathbb{E}(X)=mnp$. If $\ell$ is odd, then $\ell=2q+1$ and $\mathbb{E}[Y]\leq [m]_{tq+1}[n]_{tq+1} p^{t(2q+1)}<(1/2)m^{tq+1}n^{tq+1}p^{t(2q+1)}$. We now choose $p$ so that $\mathbb{E}[X]\geq 2\mathbb{E}[Y]$.
It suffices to set $p=m^{\frac{-tq}{2tq+t-1}}n^{\frac{-tq}{2tq+t-1}}$. Since $\mathbb{E}(X-Y)\geq (1/2)\mathbb{E}[X]$, there exists a $(m,n)$-bipartite graph $G$ for which $X-Y\geq (1/2) mnp
=(1/2)(mn)^{\frac{tq+t-1}{2tq+t-1}}$. By deleting one edge from copy of $\theta_{t,\ell}$ in $G$, we obtained
a $(m,n)$-bipartite graph $G'$ that is $\theta_{t,\ell}$-free and satisfies
\[e(G')\geq (1/2) (mn)^{\frac{tq+t-1}{2tq+t-1}}=(1/2)(mn)^{\frac{\ell+1-(2/t)}{2\ell-(2/t)}}. \]
For sufficiently large $t$, we have $e(G')\geq (1/2) (mn)^{\frac{\ell+1}{2\ell}-\eps}$, as desired.

For even integers $\ell=2q$, the analysis is similar, except that we use the bound
$\mathbb{E}[Y]\leq [m]_{tq+1}[n]_{t(q-1)+1}+[m]_{t(q-1)+1}[n]_{tq+1}) p^{2tq}<(1/2)m^{t(q-1)+1}n^{tq+1}p^{2tq}$.
We omit the details.
\end{proof}

It is quite likely using the random algebraic method used in \cite{conlon-theta}, one could show that
for each $\ell\geq 2$, there exist a $t_0$ such that for all $t\geq t_0$ if $\ell$ is odd then
$z(m,n,\theta_{t,\ell})\geq \Omega(m^{\frac{\ell+1}{2\ell}} n^{\frac{\ell+1}{2\ell}})$ and
if $\ell$ is even then $z(m,n,\theta_{t,\ell})\geq \Omega(m^{\frac{1}{2}+\frac{1}{\ell}} n^{\frac{1}{2}})$.
In any case, Proposition~\ref{prop:theta-lower} already shows that $\theta_{t,\ell}$ is not
$(\al,\beta)$-quasi-smooth and hence is also not $(\al,\beta)$-smooth. (As far as we know, this is
the first example of a family of bipartite graphs which are not $(\al,\beta)$-smooth.)
However, $\theta_{t,\ell}$-free graphs
have similar expansion properties as $C_{2\ell}$-free graphs (see \cite{JMY}, Lemma 4.1).
By using Lemma 4.1 in \cite{JMY} instead of Lemma~\ref{lem:jacques} in this paper, one can develop an analogous lemma as Lemma~\ref{lem:C2ell-free}.
Then using essentially the same proof as that of Theorem~\ref{thm:main2}, one can show the following.

\begin{theorem}
Let $t,\ell\geq 2$. Let  $\delta>0$ be any real. Let $k_0=3\ell(8\ell/\delta)^\ell+2\ell+2$.
For all odd integers $k\geq k_0$ and $n$ sufficiently large the following is true. If $G$ is an $n$-vertex
$\{\theta_{t,\ell},C_k\}$-free graph with minimum degree at least $\delta n^{1/\ell}$, then $G$ is bipartite.
\end{theorem}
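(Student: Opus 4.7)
The plan is to parallel the proof of Theorem~\ref{thm:main2} almost verbatim, with the forbidden subgraph $C_{2\ell}$ replaced by $\theta_{t,\ell}$ throughout. The essential reason this works is that $\theta_{t,\ell}$-free graphs have expansion behavior very similar to $C_{2\ell}$-free graphs: by Lemma 4.1 of \cite{JMY}, a bipartite graph of sufficiently large average degree (as a function of $t$ and $\ell$) contains many cycles of consecutive even lengths whose shortest representative is at most twice the radius, and in particular contains $\theta_{t,\ell}$. This is precisely the structural fact that Lemma~\ref{lem:jacques} provides in the $C_{2\ell}$-free setting, so substituting it gives an analog of Lemma~\ref{lem:C2ell-free} for $\theta_{t,\ell}$-free graphs with essentially the same quantitative guarantees (up to constants depending on $t$ and $\ell$).

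First I would state and prove the $\theta_{t,\ell}$-analog of Lemma~\ref{lem:C2ell-free}: in any $n$-vertex bipartite $\theta_{t,\ell}$-free graph with minimum degree $d$, (i) the ball of radius $\ell$ around any vertex $u$ has size at least $(d/c_{t,\ell})^\ell$ for an explicit constant $c_{t,\ell}$, and (ii) when $d\geq 15\ell\log n$ one can find a set $S$ of size at least $\tfrac{1}{2}(d/c'_{t,\ell})^\ell$ together with paths $P_v$ of length exactly $\ell$ from $u$ to each $v\in S$, no internal vertex of which lies on more than $d^{\ell-1}$ of the paths. The proof copies the one for Lemma~\ref{lem:C2ell-free}: iteratively bound the edge count of balls via Lemma 4.1 of \cite{JMY}, then do the random $\ell$-partition/Chernoff argument to extract a tree-like structure of depth $\ell$.

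Once this analog is in hand, the rest of the proof follows Theorem~\ref{thm:main2} step by step. I would take a maximum spanning bipartite subgraph $H$ of $G$, which by Lemma~\ref{lem:bipartite} is connected with minimum degree $\geq (\delta/2)n^{1/\ell}$; applying part (i) of the new lemma and Lemma~\ref{lem:diameter} bounds the diameter of $H$ by $L=3\ell(8\ell/\delta)^\ell$. Assuming for contradiction an edge $uv$ inside one part $X$ of the bipartition of $H$, I would randomly split $V(H)=A\sqcup B$ so that every vertex retains degree $\Omega(n^{1/\ell})$ into each side, apply part (ii) to $H[A]$ at $u$ to obtain a robust family of length-$\ell$ paths into a linear-sized set $S'\subset A$, build the bipartite subgraph $H'$ of $H$ between $S'$ and $B$ and pass to a subgraph of high minimum degree, then connect $v$ to $V(H')$ by a shortest path $Q$ of length $q\leq L+1$, apply part (ii) again inside $H'$ minus $V(Q)\setminus\{y\}$ to reach a large set $W\subseteq S'$ by length-$\ell$ paths from $y$, delete the few $w\in W$ whose $P_w$ meets $V(Q)$, and finally use the large minimum degree of the bipartite residue to greedily extend to a length-$t$ path of the correct parity. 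Stitching $uv$, $Q$, $R_w$, $T$, and $P_{w^*}$ produces a cycle of length exactly $k$, contradicting $C_k$-freeness. The parity check is identical to that in Theorem~\ref{thm:main2}: $q$ is forced to be even because $v,y$ lie on opposite/same sides of $X\cup Y$ according to the parity of $\ell$, and the choice of $k$ odd with $k\geq k_0=2\ell+L+2$ makes $t=k-1-q-2\ell$ a nonnegative even integer.

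The only genuinely new content is the analog of Lemma~\ref{lem:C2ell-free}, and its main obstacle is ensuring that Lemma 4.1 of \cite{JMY} yields the same kind of expansion bound $|B_i|\geq (d/c_{t,\ell})|B_{i-1}|$ with a constant $c_{t,\ell}$ depending only on $t,\ell$, so that $c_{t,\ell}$ can be absorbed into the definition of $k_0$. Since \cite{JMY}'s lemma gives exactly the same type of consecutive-even-cycle-length conclusion (with a radius-based bound on the shortest cycle) as Lemma~\ref{lem:jacques}, this goes through with $c_{t,\ell}$ replacing the factor $4\ell$; one then verifies that the stated $k_0=3\ell(8\ell/\delta)^\ell+2\ell+2$ suffices once $n$ is taken large enough that all residual lower-order terms are absorbed. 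Beyond this, every other ingredient (Lemmas~\ref{lem:bipartite} and \ref{lem:diameter}, the Chernoff splitting, the random $2$-coloring trick, and the greedy path extension) is exactly as used in Theorem~\ref{thm:main2}, so no new ideas are needed.
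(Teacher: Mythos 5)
Your proposal matches the paper's own treatment: the authors likewise obtain this theorem by replacing Lemma~\ref{lem:jacques} with Lemma 4.1 of \cite{JMY} to get a $\theta_{t,\ell}$-analog of Lemma~\ref{lem:C2ell-free}, and then repeating the proof of Theorem~\ref{thm:main2} verbatim. No substantive difference in approach.
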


\item Our proof method works any family $\F$ of bipartite graphs satisfying the following property.
\begin{itemize}
\item [(P1).] For any $\delta>0$, there are constants $K$ and $\mu$ such that
for every $n$-vertex $\F$-free graph $G$ with minimum degree $\delta \ex(n,\F)/n$ and for each vertex $u$ in $G$,
there are at least $\mu n$ vertices within distance $K$ from $u$.
\end{itemize}
Analogous theorems as Theorems~\ref{thm:main} and~\ref{thm:main2} hold for $\F$-free graphs. Note that $(\al, \beta)$-quasi-smooth families and theta graphs (which are not always quasi-smooth) both satisfy (P1).

\item  The proof of Theorem~\ref{thm:improve} can be generalized a bit further to yield the following.
Suppose $\F$ is a family of bipartite graphs satisfying the property (P1) and the following property (P2).
\begin{itemize}
    \item [(P2).] There exists some constants $\lambda>0$ and $2>\alpha>\beta\geq 1$ such that $z(n,\F)=\lambda n^\alpha+O(n^\beta)$.
\end{itemize}
Then Conjecture \ref{conj:ES} holds for $\F$ in the following form:
for any odd $k\geq k_0$ and sufficiently large $n$, any $n$-vertex $\F\cup \{C_k\}$-free extremal graph can be made bipartite by deleting a set of $O(n^{1+\beta-\alpha})$ vertices, which together are incident to $O(n^\beta)$ edges.

In particular, this also applies to $\F=\{C_4, C_6,...,C_{2\ell}\}$ for $\ell\in \{2,3,5\}$.

\item One could also prove our main theorems using the sparse regularity lemma (\cite{Koha}, \cite{Scott}).
However, the proofs would be more technical and would involve longer buildups. We chose to present a proof that avoids the use of sparse regularity. It seems, however, in order to make more progress on the original conjecture of Erd\H{o}s and Simonovits (Conjecture~\ref{conj:ES}), for instance to verify the conjecture for $(\al,\beta)$-quasi-smooth families, sparse regularity lemma may still be an effective tool. This is because for $(\al,\beta)$-quasi-smooth families $\F$,
like for $(\al,\beta)$-smooth families (see~\cite{AKSV}), there is a transference of density from an $\F$-free host graph to the corresponding cluster graph.

\end{enumerate}

{\noindent \bf Acknowledgement}. The authors would like to thank the referrers for many helpful comments.

\end{document}